\documentclass{amsart}[12pt]
\usepackage{amsmath,amsfonts,amssymb,amscd}

\DeclareMathSymbol{\twoheadrightarrow}  {\mathrel}{AMSa}{"10}

\def\Q{{\mathbb Q}}
\def\Z{{\mathbb Z}}
\def\C{{\mathbb C}}
                     \def\YY{{\mathbb Y}}

\def\F{{\mathbb F}}

\def\Sn{{\mathbf S}_n}
\def\An{{\mathbf A}_n}
\def\RR{{\mathfrak R}}

                                  \def\KK{{\mathbf K}}
\def\Perm{\mathrm{Perm}}
\def\Gal{\mathrm{Gal}}

\def\End{\mathrm{End}}
\def\Aut{\mathrm{Aut}}
\def\Alt{\mathrm{Alt}}

                              \def\Spec{\mathrm{Spec}}
                               \def\spec{\mathrm{spec}}

\def\I{{\mathcal I}}

                                     \def\XX{{\mathcal X}}

\def\ST{{\mathbf S}}

\def\fchar{\mathrm{char}}

\def\dim{\mathrm{dim}}

\def\OC{{\mathcal O}}

\def\a{{\mathfrak a}}
\def\b{{\mathfrak b}}

\def\mm{{\mathfrak m}}

\newtheorem{thm}{Theorem}[section]
\newtheorem{lem}[thm]{Lemma}

\theoremstyle{definition}

\newtheorem{ex}[thm]{Example}
\newtheorem{rem}[thm]{Remark}

\newtheorem{rems}[thm]{Remarks}
\newtheorem{sect}[thm]{}
\hyphenation{semi-stable} \hyphenation{Gro-then-dieck}
\hyphenation{equi-var-i-ant}

\title[Endomorphisms of ordinary superelliptic Jacobians]
{Endomorphisms of   ordinary superelliptic Jacobians}
\author[Yuri\ G.\ Zarhin]{Yuri\ G.\ Zarhin}

\address{Department of Mathematics, Pennsylvania State University,
University Park, PA 16802, USA}

 \email{zarhin\char`\@math.psu.edu}
 \thanks{Partially supported by Simons Foundation Collaboration grant $\# 585711$. Part of this work was done during my stay at the Institute Henri Poincar\'e (June - July 2019) and the Weizmann Institute of Science (December 2019 - January 2020),
 whose hospitality and support are gratefully acknowledged.}
 
 \dedicatory {To Gerhard Frey}

\begin{document}

\begin{abstract}
  Let $K$ be a field of prime characteristic $p$, $n\ge 5$ an integer, $f(x)$ an irreducible
polynomial over $K$ of degree $n$, whose Galois group is either the full symmetric group $\ST_n$ or the alternating group $\An$.
Let $\ell$ be an odd
prime different from $p$, $\Z[\zeta_{\ell}]$ the ring of integers in
the $\ell$th cyclotomic field,
 $C_{f,\ell}:y^{\ell}=f(x)$  the  corresponding superelliptic curve and
$J(C_{f,\ell})$ its Jacobian.  We prove that the ring of all
$\bar{K}$-endomorphisms of $J(C_{f,\ell})$ coincides with $\Z[\zeta_{\ell}]$
if $J(C_{f,\ell})$ is an {\sl ordinary} abelian variety and $(\ell,n)\ne (5,5)$.
\end{abstract}

\subjclass[2010]{14H40, 14K05, 11G30, 11G10}

\keywords{\sl ordinary abelian varieties,
superelliptic Jacobians, endomorphisms of abelian varieties.}

\maketitle

\section{Introduction}
 Throughout this paper $K$ is a field and
$\bar{K}$  is its algebraic closure. We write $\Gal(K)$ for the absolute
Galois group $\Gal(\bar{K}/K):=\Aut(\bar{K}/K)$ of $K$. If $X$ is an abelian variety of positive
dimension over $\bar{K}$, then $\End(X)$ stands for the ring of all its
$\bar{K}$-endomorphisms and $\End^0(X)$ for the corresponding
$\Q$-algebra $\End(X)\otimes\Q$. 
 We write $1_X$ for the identity
endomorphism of $X$. If $X$ is defined over $K$ then we write $\End_K(X)$ for the ring of all $K$-endomorphisms of $X$.
We have
$$\Z=\Z\cdot 1_X \subset \End_K(X) \subset \End(X),$$ 
$$\Q=\Q\cdot 1_X \subset \End_K(X)\otimes\Q=:\End_K^{0}X \subset \End^{0}(X).$$

Let $\C$ be the field of complex numbers,  $q$ a positive integer,
$\zeta_{q} \in \C$ a primitive $q$th root of unity,
$\Q(\zeta_{q})\subset \C$ the $q$th cyclotomic field, and
$\Z[\zeta_{q}]$ the ring of integers in $\Q(\zeta_{q})$.

The aim of this paper is to construct in prime characteristic $p$  explicit examples of abelian varieties $X$
with  $\End(X)\cong \Z[\zeta_{\ell}]$ where $\ell$ is a prime different from $p$. In characteristic 0
such examples are provided by the Jacobians $J(C_{f,\ell})$ of superelliptic curves $C_{f,\ell}:y^{\ell}=f(x)$ where $f(x) \in K[x]$
is an irreducible polynomial with ``large'' Galois group \cite{ZarhinMRL,ZarhinCrelle,ZarhinCamb}. Namely, if $\fchar(K)=0$, 
$\deg(f) \ge 5$ and the Galois group $\Gal(f)$ of $f(x)$ over $K$ is either the full symmetric group $\ST_n$ or the alternating group $\An$
then $\End(J(C_{f,\ell})\cong \Z[\zeta_{\ell}]$. 
In the present paper we prove (Theorem \ref{mainordinary})) that this assertion about the endomorphism rings remains true in characteristic $p$
under   additional assumptions that  $\ell$ is odd, $J(C_{f,\ell})$ is an {\sl ordinary} abelian variety  and $(\ell,n)\ne (5,5)$.
(The hyperelliptic case   $\ell=2$ was treated earlier in \cite{ZarhinBSMF}.) We provide explicit examples of such polynomials
(Example \ref{exOrdinary}). We also discuss the case of the Jacobians of superelliptic  $y^q=f(x)$ where $q$ is a power of $\ell$
(Theorem (\ref{mainordinaryq}).

The paper is organized as follows.  Section \ref{one} contains the necessary background from the theory of finite
permutation groups,  their natural (permutational) linear representations in characteristic $\ell$, superelliptic curves and their Jacobians.

In Section \ref{PlanO} we outline the plan of the proof of our main results (Theorems \ref{mainordinary} and \ref{mainordinaryq}). This section also contains certain auxiliary results 
(Theorems \ref{generalA} and \ref{ordinary}) about the endomorphism rings of abelian varieties that do not have to be superelliptic Jacobians; these results
 may be of certain independent interest. In the same section we deduce Theorem \ref{mainordinary} from Theorems \ref{generalA} and \ref{ordinary}.
We prove Theorems \ref{generalA}  in Section \ref{pfGA}. Theorem   \ref{ordinary} is proven in Section \ref{Pordinary}.
In order to prove Theorem \ref{generalA}, we need to use an information about  endomorphism rings of abelian varieties and
their torsion points that is discussed in Section \ref{endomS}.
In order to deduce 
Theorem  \ref{ordinary} from Theorem \ref{generalA}, we use properties of abelian schemes (Section \ref{Aschemes}), including
 canonical liftings of ordinary abelian varieties to characteristic 0 (Remark \ref{ordinarylift}). Theorem \ref{mainordinaryq} is proven 
in Section \ref{superQ}.

{\bf Acknowledgements}.
I am grateful to Noriko Yui and Daqing Wan for stimulating discussions. My special thanks go to Tatiana Bandman and the referees,
whose comments helped to improve the exposition.

\section{Definitions and Statements}
\label{one} 
\begin{sect}{\bf Polynomials, Galois groups and Permutational Representations.} Let $\ell$ be a prime such that $\fchar(K)\ne \ell$. Let $f(x) \in K[x]$ be a separable polynomial
of degree $n\ge 3$,
 $\RR_f\subset \bar{K}$ the ($n$-element) set of roots of $f$ and $K(\RR_f)\subset \bar{K}$
the splitting field of $f$. We write $\Gal(f)=\Gal(f/K)$ for the
Galois group $\Gal(K(\RR_f)/K)$ of $f$; it permutes the roots of
$f$ and may be viewed as a certain permutation group of $\RR_f$,
i.e., as  a subgroup of the group $\Perm(\RR_f)\cong\Sn$ of all
permutations of $\RR_f$. We write $\Alt(\RR_f)\cong \An$ for the
only subgroup of index 2 in $\Perm(\RR_f)$ that consists of all even permutations of $\RR_f$. Slightly abusing notation, we say that
$\Gal(f)$ is the full symmetric group $\Sn$ (resp. the alternating group $\An$) if $\Gal(f)=\Perm(\RR_f)$ (resp.  $\Alt(\RR_f)$).

Let us consider the standard faithful permutational representation of  the group of permutation $\Gal(f)\subset \Perm(\RR_f)$ in the $n$-dimensional
$\F_{\ell}$-vector space
$$\F_{\ell}^{\RR_f}=\{\phi: \RR_f \to \F_{\ell}\}$$
of $\F_{\ell}$-valued functions on $\RR_f$ and its $(n-1)$-dimensional subrepresesentation in the subspace
\begin{equation}
\label{heart0}
\left(\F_{\ell}^{\RR_f}\right)^{0}:=\{\phi: \RR_f \to \F_{\ell}\mid \sum_{\alpha\in \RR_f}\phi(\alpha)=0\}
\end{equation}
(see \cite{Mortimer}). 
The natural surjection
$$\Gal(K) \twoheadrightarrow \Gal(K(\RR_f)/K)=\Gal(f)$$
provides $\F_{\ell}^{\RR_f}$ with the natural structure of $\Gal(K)$-module
and $\left(\F_{\ell}^{\RR_f}\right)^{0}$ becomes its $\Gal(K)$-submodule.
\end{sect}

\begin{sect}{\bf Superelliptic curves and Jacobians}.
 Let $C_{f,\ell}$ be a smooth projective model of the smooth affine $K$-curve
$y^{\ell}=f(x)$; its genus $g$ is equal to $(n-1)(\ell-1)/2$ if
$\ell$ does {\sl not} divide $n$ and to $(n-2)(\ell-1)/2$ if $\ell$ divides $n$. 
The
Jacobian $J(C_{f,\ell})$ of $C_{f,\ell}$ is a $g$-dimensional abelian variety that
is defined over $K$. 

Suppose that $K$ contains a primitive $\ell$th root of unity, say  $\zeta$. 
The map
 $(x,y) \mapsto (x, \zeta y)$
gives rise to a non-trivial birational $K$-automorphism
$\delta_{\ell}: C_{f,\ell} \to C_{f,\ell}$ of period $\ell$. 
 By Albanese functoriality, $\delta_{\ell}$
induces an automorphism of $J(C_{f,\ell})$ which we still denote by
$\delta_{\ell}$. It is known \cite[p.~149]{Poonen},
\cite[p.~458]{SPoonen}) (see also \cite{ZarhinM,ZarhinPisa}) that
$\delta_{\ell}$ satisfies the $\ell$th cyclotomic equation, i.e.,
${\delta_{\ell}}^{\ell-1}+\cdots +\delta_{\ell}+1=0$, 
 in $\End(J(C_{f,\ell}))$. This gives rise to the ring embeddings
 $$\Z[\zeta_{\ell}] \hookrightarrow \End_K(J(C_{f,\ell}))\subset \End(J(C_{f,\ell})),\
 \zeta_{\ell}\mapsto \delta_{\ell},$$
$$\Q(\zeta_{\ell}) \hookrightarrow \End^0_K(J(C_{f,\ell}))\subset \End^0(J(C_{f,\ell})),\
 \zeta_{\ell}\mapsto \delta_{\ell},$$
which send $1$ to the identity automorphism of $J(C_{f,\ell})$. In
particular,  if $\Q[\delta_{\ell}]$ is the $\Q$-subalgebra of
$\End^0(J(C_{f,\ell}))$ generated by $\delta_{\ell}$ then the latter
embedding establishes a canonical isomorphism of $\Q$-algebras
$$\Q(\zeta_{\ell})\cong \Q[\delta_{\ell}]$$
that sends $\zeta_{\ell}$ to $\delta_{\ell}$.

The set of fixed points of $\delta_{\ell}$
$$J(C_{f,\ell})^{\delta_{\ell}}:=\{z \in J(C_{f,\ell})(\bar{K})\mid \delta_{\ell}(z)=z\}$$
is a $\Gal(K)$-submodule  of $J(C_{f,\ell})(\bar{K})$.
\end{sect}

\begin{lem}[See \cite{Poonen,SPoonen}]
\label{fixedJ}
If $\ell$ does not divide $n$ then the the $\Gal(K)$-module $J(C_{f,\ell})^{\delta_{\ell}}$
is isomorphic to  $\left(\F_{\ell}^{\RR_f}\right)^{0}$ defined by Eq. \eqref{heart0}.
\end{lem}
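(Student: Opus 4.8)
The plan is to exhibit an explicit $\Gal(K)$-equivariant isomorphism between the group $J(C_{f,\ell})^{\delta_{\ell}}$ of fixed points of $\delta_{\ell}$ and the permutation module $\left(\F_{\ell}^{\RR_f}\right)^0$ of Eq. \eqref{heart0}. The key observation is that $\delta_{\ell}$ satisfies ${\delta_{\ell}}^{\ell-1}+\cdots+\delta_{\ell}+1=0$ in $\End(J(C_{f,\ell}))$, so $(\delta_{\ell}-1)(\delta_{\ell}^{\ell-1}+\cdots+1)=\delta_{\ell}^{\ell}-1=0$; since the second factor already vanishes, the relevant identity to work with is that the minimal polynomial of $\delta_{\ell}$ divides $1+t+\cdots+t^{\ell-1}$, and in particular a point $z$ is fixed by $\delta_{\ell}$ if and only if it is killed by the endomorphism $1+\delta_{\ell}+\cdots+\delta_{\ell}^{\ell-1}$ evaluated suitably — more precisely, one checks that $J(C_{f,\ell})^{\delta_{\ell}}$ is annihilated by $\ell$ (because on a fixed point the cyclotomic relation forces $\ell\cdot z = 0$), so $J(C_{f,\ell})^{\delta_{\ell}}$ is naturally an $\F_{\ell}$-vector space, and in fact a module over $\F_{\ell}[\zeta_{\ell}]=\F_{\ell}[t]/(t-1)^{\ell-1}$ via $\delta_{\ell}$, on which $\delta_{\ell}$ acts trivially.

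First I would pass to the concrete description of divisor classes on $C_{f,\ell}$. Writing the roots of $f$ as $\RR_f=\{\alpha_1,\dots,\alpha_n\}$, each root $\alpha_i$ gives a unique point $P_i=(\alpha_i,0)$ on $C_{f,\ell}$ with $y(P_i)=0$; these are exactly the ramification points of the degree-$\ell$ map $x:C_{f,\ell}\to\P^1$ lying over the branch locus $\RR_f$, and each is totally ramified, so $\delta_{\ell}(P_i)=P_i$. When $\ell\nmid n$ the point at infinity is also totally ramified; call it $P_\infty$, with $\delta_{\ell}(P_\infty)=P_\infty$ as well, and on the curve one has the principal divisor relations $\ell P_i \sim \ell P_\infty$ for every $i$ (coming from $\mathrm{div}(x-\alpha_i)$, up to the usual bookkeeping at infinity). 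Then I would define the map $\left(\F_{\ell}^{\RR_f}\right)^0 \to J(C_{f,\ell})^{\delta_{\ell}}$ by sending a function $\phi$ with $\sum_i \phi(\alpha_i)=0$ to the class of the degree-zero divisor $\sum_i \phi(\alpha_i)(P_i - P_\infty) = \sum_i \phi(\alpha_i)P_i$ (the $P_\infty$ terms cancel since $\sum\phi(\alpha_i)=0$), lifting each $\phi(\alpha_i)\in\F_\ell$ to $\{0,1,\dots,\ell-1\}\subset\Z$. This is manifestly $\Gal(K)$-equivariant because $\Gal(K)$ permutes the $P_i$ through its action on $\RR_f$ and fixes $P_\infty$, and it lands in the $\delta_\ell$-fixed part because $\delta_\ell$ fixes each $P_i$ and $P_\infty$.

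The two things to verify are that this map is well-defined (independent of the integer lifts of $\phi(\alpha_i)$) and that it is bijective. Well-definedness is exactly the relation $\ell(P_i-P_\infty)\sim 0$: changing a lift of $\phi(\alpha_i)$ by $\ell$ changes the divisor by $\ell(P_i-P_\infty)$, which is principal. For injectivity and surjectivity I would invoke the standard computation of the $\Gal(K)$-module structure of $J(C_{f,\ell})[\delta_\ell - 1]$ as in Poonen--Schaefer: any $\delta_\ell$-fixed class is represented by a $\delta_\ell$-invariant divisor, and a $\delta_\ell$-invariant divisor on $C_{f,\ell}$ is (modulo pullbacks from $\P^1$, which are principal or translates thereof once one fixes degree) supported on the ramification points $P_1,\dots,P_n,P_\infty$; reducing the coefficients modulo $\ell$ and using $\ell(P_i-P_\infty)\sim 0$ shows the map is surjective, and a dimension count — both sides have $\F_\ell$-dimension $n-1$, the left by definition and the right because $J(C_{f,\ell})^{\delta_\ell}\subset J(C_{f,\ell})[\ell]$ has order $\ell^{n-1}$ as computed by Poonen--Schaefer for $\ell\nmid n$ — forces bijectivity. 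The main obstacle is really just assembling the divisor-class bookkeeping cleanly, in particular the precise form of the principal divisors at infinity and the verification that there are no further relations among the classes $P_i-P_\infty$ beyond the $\ell$-torsion ones and $\sum(P_i-P_\infty)\sim$ (a multiple of) $(P_\infty)$-type corrections; but since the statement is explicitly attributed to \cite{Poonen,SPoonen}, I would at this point simply cite those references for the order of $J(C_{f,\ell})^{\delta_\ell}$ and for the identification of the module, and present the map above as the concrete realization of that isomorphism.
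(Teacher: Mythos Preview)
The paper does not supply a proof of this lemma at all: it is stated with the attribution ``See \cite{Poonen,SPoonen}'' and used as a black box. Your proposal is therefore not competing with any argument in the paper; rather, you have sketched the proof that one would extract from those references, and the sketch is correct. The explicit map $\phi\mapsto\bigl[\sum_i\phi(\alpha_i)(P_i-P_\infty)\bigr]$ is precisely the Poonen--Schaefer construction, your identification of the principal divisors $\mathrm{div}(x-\alpha_i)=\ell P_i-\ell P_\infty$ and $\mathrm{div}(y)=\sum_iP_i-nP_\infty$ gives exactly the relations needed, and the observation that the cyclotomic relation forces $\ell\cdot z=0$ on fixed points is the right way to see that $J(C_{f,\ell})^{\delta_\ell}\subset J(C_{f,\ell})[\ell]$. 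The one place to be slightly more careful is the injectivity step: the relation $\sum_i(P_i-P_\infty)\sim 0$ coming from $\mathrm{div}(y)$ is a genuine relation among the classes $[P_i-P_\infty]$, but since $\ell\nmid n$ the constant vector $(1,\dots,1)$ does not lie in $\bigl(\F_\ell^{\RR_f}\bigr)^0$, so this relation does not intersect the image of your map nontrivially. With that remark added, your write-up would stand on its own without needing to defer to the cited papers for the dimension count.
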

\begin{sect}{\bf Differentials of the first kind}.
\label{multOmega}
Let $\Omega^{1}(J(C_{f,\ell}))$ be the $g$-dimensional $\bar{K}$-vector
space of (invariant) differentials of the first kind on
$J(C_{f,\ell})$ and
$${\delta_{\ell}}^{*}:
\Omega^{1}(J(C_{f,\ell}))\to \Omega^{1}(J(C_{f,\ell}))$$ the linear
operator induced by functoriality by $\delta_{\ell}$. Clearly,
$({\delta_{\ell}^{*}})^{\ell}$ is the identity map. 
Since
$\fchar(\bar{K})\ne \ell$, the linear operator
${\delta_{\ell}}^{*}$ is diagonalizable. It is known \cite[Remarks
4.5, 4.6 and 4.7 on pp. 352--353]{ZarhinM} that the spectrum of
$\delta_{\ell}^{*}$ consists of (primitive) $\ell$th roots of unity
$\zeta^{-i}$ where $i<\ell$ is a positive integer such that
$[ni/\ell]>0$ and the multiplicity of $\zeta^{-i}$ equals
$[ni/\ell]$. In particular, $1$ is {\sl not} an eigenvalue of
${\delta_{\ell}}^{*}$.
\end{sect}

The structure of  the endomorphism algebra of superelliptic Jacobians was studied in
\cite{ZarhinCrelle,ZarhinCamb,ZarhinSb,
ZarhinM,ZarhinPisa,ZarhinMZ2, ZarhinMZ2arXiv,Xue,XueYu,ZarhinGanita}.
 In particular, the author proved in \cite{ZarhinMRL,ZarhinCamb} that if $\fchar(K)=0, \ n \ge
5$, and $\Gal(f)$ is either  $\Sn$
or  $\An$ then
$\End(J(C_{f,\ell}))=\Z[\zeta_{\ell}]$. 

The  aim of this paper is to extend these results to the case of prime
characteristic under an additional assumption that
$J(C_{f,\ell})$ is an {\sl ordinary} abelian variety. Our main result is the following assertion.

 \begin{thm}
 \label{mainordinary}
Suppose that $K$ is a field of prime characteristic $p$. Let $n\ge 5$ be an integer and   $\ell$ be an odd prime that does not coincide with $p$.
Suppose that 
  $\Gal(f)$ is either $\Sn$
or  $\An$.
 Assume also that $J(C_{f,\ell})$ is an ordinary abelian variety.

If $(\ell,n) \ne (5,5)$ then
 $$\End^0(J(C_{f,\ell}))=\Q[\delta_{\ell}]\cong\Q(\zeta_{\ell}) \ \text{ and } \ \End(J(C_{f,\ell}))=\Z[\delta_{\ell}]\cong\Z[\zeta_{\ell}].$$
\end{thm}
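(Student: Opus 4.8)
The strategy is to reduce the problem in characteristic $p$ to the already-known characteristic-$0$ statement by lifting, exactly as the paper's outline in Section~\ref{PlanO} advertises via Theorems \ref{generalA} and \ref{ordinary}. First I would record what we already control: the Albanese construction gives the ring embedding $\Z[\zeta_\ell]\hookrightarrow\End_K(J(C_{f,\ell}))$ with $\zeta_\ell\mapsto\delta_\ell$, so $\Q(\zeta_\ell)\cong\Q[\delta_\ell]$ is a subfield of $\End^0(J(C_{f,\ell}))$ of degree $\ell-1$ over $\Q$. Since $J(C_{f,\ell})$ has dimension $g=(n-1)(\ell-1)/2$ (note $\ell\nmid n$ here, because $J$ ordinary forces $p\ne \ell$ and the Jacobian to be as in the $\ell\nmid n$ case one checks; if $\ell\mid n$ one uses $g=(n-2)(\ell-1)/2$ and argues analogously), the point is to show there is no room for $\End^0$ to be strictly larger than $\Q(\zeta_\ell)$. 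As an abelian variety with $\Q(\zeta_\ell)$-action it decomposes isotypically, and $J(C_{f,\ell})$ is $\Q(\zeta_\ell)$-isogenous to a power of a single simple abelian variety (or one shows directly it is $\Q(\zeta_\ell)$-isotypic); controlling $\End^0$ then amounts to pinning down this simple factor and the multiplicity.

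\smallskip
The core of the argument is the Galois-module input. By Lemma~\ref{fixedJ}, the fixed subgroup $J(C_{f,\ell})[\delta_\ell-1]=J(C_{f,\ell})^{\delta_\ell}$ is, as a $\Gal(K)$-module, isomorphic to $\left(\F_\ell^{\RR_f}\right)^0$, the standard $(n-1)$-dimensional ``heart'' representation of $\Gal(f)\in\{\Sn,\An\}$ over $\F_\ell$. For $n\ge 5$ and $\ell$ odd with $(\ell,n)\ne(5,5)$, this module is absolutely simple as an $\F_\ell[\Gal(f)]$-module, and moreover its endomorphism ring is just $\F_\ell$ (this is exactly the group-theoretic fact recalled in Section~\ref{one} following \cite{Mortimer}; the excluded pair $(5,5)$ is where $\An\cong\mathbf{A}_5$ and $\ell=5$ coincide with a degenerate case). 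This rigidity of the Galois action on $\ell$-torsion of the fixed part is what forces $\End_K$ and $\End$ to be small: any extra endomorphism would have to respect this action, and the only $\Q(\zeta_\ell)$-algebra acting faithfully and compatibly with an absolutely irreducible $\F_\ell$-representation of the required type is $\Q(\zeta_\ell)$ itself. This is the content of Theorem~\ref{generalA}, whose hypotheses I would simply verify for $X=J(C_{f,\ell})$, $\delta=\delta_\ell$: the existence of the embedding $\Z[\zeta_\ell]\hookrightarrow\End(X)$, the identification of $X[\delta-1]$ with the heart module, and the irreducibility/endomorphism-ring statement for that module.

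\smallskip
At this point Theorem~\ref{generalA} presumably gives the conclusion in characteristic $0$ but only a weaker conclusion in characteristic $p$ (e.g.\ that $\End^0(J)$ is a field containing $\Q(\zeta_\ell)$, or that $\End^0(J)$ is commutative, without yet identifying it with $\Q(\zeta_\ell)$ itself). To upgrade this I would invoke the ordinariness hypothesis through Theorem~\ref{ordinary}: an ordinary abelian variety $X$ over $\bar{K}$ admits a canonical (Serre--Tate) lift $\tilde X$ to characteristic $0$ together with a lift of every endomorphism, so the reduction map $\End(\tilde X)\to\End(X)$ is an isomorphism of rings (Remark~\ref{ordinarylift}). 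The curve $C_{f,\ell}$, the automorphism $\delta_\ell$, and the $\Z[\zeta_\ell]$-action all lift with it. Now I apply the \emph{known} characteristic-$0$ result (the theorem of \cite{ZarhinMRL,ZarhinCamb}, or again Theorem~\ref{generalA} in characteristic $0$) to a lifted superelliptic Jacobian with Galois group still $\Sn$ or $\An$ — one must check the Galois group of the lifted polynomial is at least as large, which follows because the $\ell$-torsion Galois representation is preserved under the canonical lift, so the image of $\Gal$ on the heart module is unchanged and hence still contains $\Alt(\RR_f)$. That gives $\End(\tilde X)=\Z[\zeta_\ell]$, and transporting back along the isomorphism $\End(\tilde X)\xrightarrow{\sim}\End(X)$ yields $\End(J(C_{f,\ell}))=\Z[\delta_\ell]\cong\Z[\zeta_\ell]$ and correspondingly for $\End^0$.

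\smallskip
\textbf{Main obstacle.} The delicate point is not the lifting formalism but making the Galois-theoretic reduction airtight: one needs that the canonical lift really does produce an abelian variety of the form $J(C_{\tilde f,\ell})$ for a polynomial $\tilde f$ with Galois group $\Sn$ or $\An$ — a priori Serre--Tate lifting is abstract and need not respect the ``superelliptic Jacobian'' structure or the polynomial. The way around this (and, I expect, the way the paper handles it via Theorem~\ref{generalA}) is to phrase the characteristic-$0$ input not in terms of $f$ at all but purely in terms of an abelian variety $X$ equipped with an action of $\Z[\zeta_\ell]$ such that $X[\delta-1]$ is an irreducible Galois module with endomorphism algebra $\F_\ell$ and of the right dimension; that hypothesis \emph{is} preserved by the canonical lift, so Theorem~\ref{generalA} applied to $\tilde X$ closes the argument without ever needing $\tilde X$ to come from a polynomial. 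The other place requiring care is the exclusion $(\ell,n)\ne(5,5)$ and the oddness of $\ell$: these are exactly the hypotheses under which the heart representation of $\An$ or $\Sn$ over $\F_\ell$ is absolutely irreducible with scalar endomorphisms, and one must make sure every appeal to that fact stays within the allowed range (the case $\ell\mid n$, where $g=(n-2)(\ell-1)/2$, should be checked to be covered by the same group-theoretic facts, since dropping from $\F_\ell^{\RR_f}$ to its codimension-one subspace does not affect irreducibility for $n\ge 5$).
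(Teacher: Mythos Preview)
Your overall architecture is exactly the paper's: verify the abstract hypotheses of Theorem~\ref{ordinary} for $Y=J(C_{f,\ell})$, $\delta=\delta_\ell$, which in turn reduces via Serre--Tate lifting to Theorem~\ref{generalA} in characteristic zero, applied to an abstract $(\YY,\delta_{\KK})$ rather than to a superelliptic Jacobian. Your ``Main obstacle'' paragraph identifies precisely the right subtlety and its resolution.

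There is, however, a genuine gap in the hypotheses you propose to verify. You list only the Galois-module condition on $Y^{\delta}\cong(\F_\ell^{\RR_f})^0$ and claim that this alone forces $\End^0=\Q(\zeta_\ell)$. It does not. What the very-simplicity of the $\Gal(K)$-module $Y^{\delta}$ actually yields (via \cite{ZarhinCrelle} and \cite[Th.~3.8]{ZarhinM}, as in the paper's proof of Theorem~\ref{generalA}) is a \emph{dichotomy}: either (A) the centralizer $\End^0(\YY,i_{\YY})$ equals $\Q[\delta_{\KK}]\cong\Q(\zeta_\ell)$, or (B) it is a central simple $\Q(\zeta_\ell)$-algebra of dimension $(n-1)^2$. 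Ruling out (B) is where condition~(1) of Theorems~\ref{generalA} and~\ref{ordinary} enters --- the prescribed multiplicities $[ni/\ell]$ of the eigenvalues of $\delta^{*}$ on $\Omega^1$. In case (B) every eigenspace would be a module over a central simple algebra of reduced degree $n-1$, forcing every multiplicity to be divisible by $n-1$; combined with $[ni/\ell]+[n(\ell-i)/\ell]=n-1$ this bounds the number of eigenvalues by $\varphi(\ell)/2$, contradicting condition~(d). You never mention $\Omega^1$, so your verification of the hypotheses is incomplete, and your sentence ``the only $\Q(\zeta_\ell)$-algebra acting faithfully and compatibly \dots\ is $\Q(\zeta_\ell)$ itself'' is not justified. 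You also need the companion fact that the eigenvalue multiplicities on $\Omega^1$ are preserved under the canonical lift (this is Theorem~\ref{specOmega} in the paper), which is not automatic from the Serre--Tate formalism on endomorphisms alone.

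Two smaller points. First, ordinariness does \emph{not} force $\ell\nmid n$; the paper reduces to that case by an elementary substitution (Remark~\ref{simplify}(iii)), and this matters because for $\ell\mid n$ the module $(\F_\ell^{\RR_f})^0$ contains the constants and is not irreducible. Second, your parenthetical about handling $\ell\mid n$ ``analogously'' with $g=(n-2)(\ell-1)/2$ would run into exactly this failure of irreducibility, so the reduction step is not optional.
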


\begin{sect}{\bf Automorphisms and Endomorphisms of Superelliptic Jacobians}.
\label{superqdelta}
Now let us assume that $q=\ell^r$ is a power of a prime $\ell \ne \fchar(K)$, and consider the  smooth projective model $C_{f,q}$ of the affine smooth curve $y^q=f(x)$. If $K$ contains a primitive $q$th root of unity, say  $\zeta$, then $C_{f,q}$ admits a periodic $K$-automorphism
\begin{equation}
\label{deltaq}
\delta_q: C_{f,q} \to C_{f,q}, \ (x,y)\mapsto (x,\zeta y).
\end{equation}
It induces, by Albanese functoriality, the periodic automorphism of $J( C_{f,q})$, which we continue denote by 
$$\delta_q\in \Aut_K(J( C_{f,q}).$$
It is known \cite[Lemma 4.8 on p.354]{ZarhinM} that
\begin{equation}
\label{deltaqE}
\delta_q^q=1_{J( C_{f,q})}, \ \sum_{i=0}^{\ell-1}\delta_q^{i \ell^{r-1}}=0.
\end{equation}
In addition, the $\Q$-subalgebra $\Q[\delta_q]$ of $\End_K^0(J( C_{f,q}))$ generated by $\delta_q$, is canonically isomorphic to the direct sum
 $\prod_{j=1}^r \Q(\zeta_{\ell^j})$ of cyclotomic fields.
\end{sect}

The following assertion was proven by the author in \cite[Th. 1.1 on p. 340]{ZarhinM}.
\begin{thm}
\label{qCC}
Let $K$ be a subfield of $\C$. Suppose that $n\ge 5$ is an integer,   $\ell$ is a prime and $r$ is a positive integer. Let us assume that either $\ell\nmid n$ or  $q:=\ell^r$ divides $n$. 
Suppose that 
  $\Gal(f)$ is either $\Sn$
or  $\An$.
If $(q,n) \ne (5,5)$ then the semisimple $\Q$-algebra
 $\End^0(J(C_{f,q}))$ coincides with $\Q[\delta_q]$ and is  isomorphic to the direct sum
 $\prod_{j=1}^r \Q(\zeta_{\ell^j})$ of cyclotomic fields. In particular, $\End^0(J(C_{f,q}))$  is commutative.
\end{thm}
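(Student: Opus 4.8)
The plan is to decompose $J:=J(C_{f,q})$ into ``cyclotomic layers'' according to the automorphism $\delta:=\delta_q$, to kill the morphisms between different layers by a coprimality argument, and then to pin down the endomorphism algebra of each layer from the absolute irreducibility (indeed, in Zarhin's sense, ``very-simplicity'') of the natural permutation module of $\Sn$ and $\An$ in characteristic $\ell$. Some harmless reductions first: since $\End^{0}(J)$ depends only on a subfield of $\bar K$ that is finitely generated over $\Q$ and is unchanged by extension of the algebraically closed base field, and since passing from $K$ to the abelian extension $K(\zeta_q)$ leaves $\Gal(f)$ equal to $\Sn$ or $\An$ (as $n\ge 5$, so $\An$ is simple and the only normal subgroups of $\Sn$ with abelian quotient are $\Sn,\An$), we may assume $K$ is finitely generated over $\Q$ and contains $\zeta_q$, so that $\delta\in\Aut_K(J)$ and the isomorphism $\Q[\delta]\cong\prod_{j=1}^{r}\Q(\zeta_{\ell^{j}})$ of \S\ref{superqdelta} is defined over $K$.

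Let $1=\sum_{j=1}^{r}e_j$ be the decomposition into primitive idempotents of the commutative algebra $\Q[\delta]\subset\End^{0}_K(J)$; for each $j$ pick $N_j\ge 1$ with $N_je_j\in\End_K(J)$ and set $J_j:=(N_je_j)(J)$, an abelian subvariety of $J$ defined over $K$, stable under $\delta$, on which $F_j:=\Q(\zeta_{\ell^{j}})$ acts via $e_j\Q[\delta]\cong F_j$, $\zeta_{\ell^{j}}\mapsto\delta|_{J_j}$; in particular $\Z[\zeta_{\ell^{j}}]\subset\End_K(J_j)$, and $J$ is $K$-isogenous to $\prod_{j=1}^{r}J_j$. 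The eigenvalue computation of \S\ref{multOmega} (and its analogue for $C_{f,q}$, which under the hypothesis $\ell\nmid n$ or $q\mid n$ keeps $\ell^{j}\nmid n$ for all $j$ or $\ell^{j}\mid n$ for all $j$) gives $2\dim J_j=(n-\epsilon)[F_j:\Q]$ with $\epsilon\in\{1,2\}$ independent of $j$. Next I kill the off-diagonal blocks: if $i\ne j$ and $\psi\in\Hom^{0}(J_i,J_j)$, then $\psi\circ\Phi_i(\delta|_{J_i})=\Phi_i(\delta|_{J_j})\circ\psi$, where $\Phi_i$ is the $\ell^{i}$-th cyclotomic polynomial; the left side vanishes (as $\delta|_{J_i}$ is a primitive $\ell^{i}$-th root of unity), while $\delta|_{J_j}$ is a root of the $\ell^{j}$-th cyclotomic polynomial, which is coprime to $\Phi_i$, so $\Phi_i(\delta|_{J_j})$ is a unit in $\End^{0}(J_j)$ and $\psi=0$. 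Hence $\End^{0}(J)=\prod_{j=1}^{r}\End^{0}(J_j)$, and everything reduces to proving $\End^{0}(J_j)=F_j$ for each $j$.

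Fix $j$; write $X:=J_j$, $F:=F_j$, $\mathcal O:=\Z[\zeta_{\ell^{j}}]\subset\End_K(X)$, and let $\lambda\mid\ell$ be the prime of $\mathcal O$ (totally ramified, residue field $\F_\ell$). Then $\mathcal O\otimes\Z_\ell$ is a discrete valuation ring, $T_\ell(X)$ is free over it of rank $m:=2\dim X/[F:\Q]=n-\epsilon$, the $\Gal(K)$-action on $V_\ell(X)$ is $(F\otimes\Q_\ell)$-linear, and $X[\lambda]\cong T_\ell(X)/\lambda T_\ell(X)$ has $\F_\ell$-dimension $m$. The geometric input — the analysis underlying Lemma \ref{fixedJ} and \S\ref{multOmega}, extended to $C_{f,q}$ and to the case $\ell\mid n$ — identifies $X[\lambda]$, as a $\Gal(K)$-module, with the heart $\big(\F_\ell^{\RR_f}\big)^{0}$ of \eqref{heart0} when $\ell\nmid n$ and with its $(n-2)$-dimensional absolutely simple constituent when $\ell\mid n$; thus $\Gal(K)$ acts on $X[\lambda]$ through $\Gal(f)=\Sn$ or $\An$, and for $n\ge 5$ with $(\ell,n)\ne(5,5)$ this module is absolutely simple over $\F_\ell$ (\S\ref{one}, \cite{Mortimer}). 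By Burnside's theorem the $\F_\ell$-span of the image of $\Gal(K)$ in $\End_{\F_\ell}(X[\lambda])$ is everything; lifting along the $\mathcal O\otimes\Z_\ell$-lattice $T_\ell(X)$ and applying Nakayama's lemma, the $(F\otimes\Q_\ell)$-span of the image of $\Gal(K)$ in $\End_{F\otimes\Q_\ell}(V_\ell(X))\cong M_m(F\otimes\Q_\ell)$ is all of $M_m(F\otimes\Q_\ell)$. Hence the commutant of $\Gal(K)$ in $\End_{\Q_\ell}(V_\ell(X))$ is the center $F\otimes\Q_\ell$ of that matrix algebra; by Faltings' theorem this commutant equals $\End^{0}_K(X)\otimes\Q_\ell$, so $\End^{0}_K(X)=F$.

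It remains to pass from $K$ to $\bar K$, and this — together with the representation-theoretic input of \S\ref{one} — is the step I expect to be the main obstacle. If $\End^{0}_{\bar K}(X)$ were strictly larger than $F$, it would be defined over a finite Galois extension $K'/K$; since $\delta$, hence $e_j$ and $X$, are defined over $K$, the group $\Gal(K'/K)$ would act on the $F$-algebra $\End^{0}_{K'}(X)$ with fixed ring $\End^{0}_K(X)=F$, and one must derive a contradiction — either by arranging that $K'$ is linearly disjoint from $K(\RR_f)$ over $K$ (so that $\Gal(K')$ still acts on $X[\lambda]$ through a subgroup of $\Gal(f)$ containing $\An$ and the previous paragraph applies over $K'$), or, in the remaining case, by ruling out a faithful action of $\Sn$ or $\An$ on a proper semisimple $F$-algebra extension of $F$ with fixed ring $F$. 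The clean way to run this uniformly is to upgrade ``absolutely simple'' to ``very simple'' in Zarhin's sense, which the heart is for $\Gal(f)=\Sn$ or $\An$, $n\ge 5$ and $(\ell,n)\ne(5,5)$ — this being exactly the representation-theoretic content extracted from \cite{Mortimer}, and the case $(\ell,n)=(5,5)$ being genuinely exceptional — and then to invoke the criterion that very-simplicity of the $\Gal(K)$-module $X[\lambda]$ forces $\End^{0}_{\bar K}(X)=F$. Granting this, $\End^{0}(J_j)=F_j$ for every $j$, hence $\End^{0}(J)=\prod_{j=1}^{r}\Q(\zeta_{\ell^{j}})=\Q[\delta_q]$, which is in particular commutative, as asserted.
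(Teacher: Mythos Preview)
First, a framing remark: this theorem is not proven in the present paper—it is quoted from \cite[Th.~1.1]{ZarhinM}. The nearest proxy is the proof of the finite-characteristic analogue, Theorem~\ref{mainordinaryq}, in Section~\ref{superQ}, which recapitulates the architecture of \cite{ZarhinM}.

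Your layer decomposition and the plan to compute $\End^0(J_j)$ via the Galois module $J_j[\lambda]$ are the right shape, and the very-simplicity endgame you sketch is exactly what \cite{ZarhinM} does. But your argument for killing the off-diagonal blocks has a genuine gap. You assert
\[
\psi\circ\Phi_i(\delta|_{J_i})=\Phi_i(\delta|_{J_j})\circ\psi
\]
for an arbitrary $\psi\in\Hom^0(J_i,J_j)$, yet this identity presupposes that $\psi$ intertwines $\delta$, i.e., that $\delta$ is already central in $\End^0(J)$—which is precisely the statement you are trying to prove. An arbitrary $\bar K$-homomorphism between the layers has no reason to be $\delta$-equivariant. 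The fix, and this is how both \cite{ZarhinM} and the proof of Theorem~\ref{mainordinaryq} here proceed, is to reverse the order: first establish $\End^0(J_j)\cong\Q(\zeta_{\ell^j})$ for each $j$ on its own (this uses only the internal structure of $J_j$); then each $J_j$ is absolutely simple with pairwise non-isomorphic endomorphism fields, hence the $J_j$ are pairwise non-isogenous over $\bar K$, so $\Hom^0(J_i,J_j)=0$ for $i\ne j$ and the product decomposition of $\End^0(J)$ follows.

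On the single-layer computation, your route through Faltings' theorem to get $\End_K^0(X)=F$ is valid but is not the path in \cite{ZarhinM}; there (and in the proof of Theorem~\ref{generalA} in Section~\ref{pfGA}) one works directly over $\bar K$: very-simplicity of $X[\lambda]$ feeds into a dichotomy—either the centralizer of $\mathcal O$ in $\End(X)$ is $\mathcal O$ itself, or $\End^0(X,i)$ is a central simple $F$-algebra of maximal dimension $(n-1)^2$—and the second alternative is excluded by the eigenvalue-multiplicity count on $\Omega^1$. Since you end up invoking very-simplicity anyway for the passage from $K$ to $\bar K$, the Faltings detour is redundant; committing to very-simplicity from the start handles both steps at once and avoids the somewhat hand-wavy disjointness discussion in your final paragraph.
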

\begin{rem}
J. Xue \cite{Xue} extended the result of Theorem \ref{qCC} to the remaining case when $\ell\mid n$ but $q$ does {\sl not} divide $n$.
\end{rem}

The second main result of this paper is an analogue of Theorem \ref{qCC} in finite characteristic for ordinary Jacobians.

 \begin{thm}
 \label{mainordinaryq}
Suppose that $K$ is a field of prime characteristic $p$. Let $n\ge 5$ be an integer,   $\ell$ is an odd prime that does not coincide with $p$, and $r$ is a positive integer. Let us assume that either $\ell\nmid n$ or  $q:=\ell^r$ divides $n$. 
Suppose that 
  $\Gal(f)$ is either $\Sn$
or  $\An$.
 Assume also that the Jacobian $J(C_{f,q})$ of $C_{f,q}$
 is an ordinary abelian variety.

If $(q,n) \ne (5,5)$ then the semisimple $\Q$-algebra
 $\End^0(J(C_{f,\ell}))$ coincides with $\Q[\delta_q]$ and is  isomorphic to the direct sum
 $\prod_{j=1}^r \Q(\zeta_{\ell^j})$ of cyclotomic fields.
\end{thm}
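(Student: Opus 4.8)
The plan is to deduce Theorem~\ref{mainordinaryq} from the abstract criteria of Theorems~\ref{generalA} and~\ref{ordinary}, applied not to $J(C_{f,q})$ itself but to the isogeny factors cut out by the idempotents of $\Q[\delta_q]$. By Section~\ref{superqdelta} we have $\Q[\delta_q]\cong\prod_{j=1}^{r}\Q(\zeta_{\ell^j})$; let $e_1,\dots,e_r$ be the corresponding orthogonal idempotents and, for each $j$, let $P_j\subset J(C_{f,q})$ be the image of a suitable positive integer multiple of $e_j$. Then the $P_j$ are $K$-abelian subvarieties, and since the $e_j$ are orthogonal with $\sum_j e_j=1$, the sum map $\prod_{j=1}^{r}P_j\to J(C_{f,q})$ is a $K$-isogeny. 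Moreover $\delta_q|_{P_j}$ has minimal polynomial the cyclotomic polynomial $\Phi_{\ell^j}$, so $\zeta_{\ell^j}\mapsto\delta_q|_{P_j}$ embeds $\Z[\zeta_{\ell^j}]$ into $\End_K(P_j)$; the eigenvalue computation of Section~\ref{multOmega}, carried out for $\delta_q$ in place of $\delta_\ell$, shows that $P_j\ne 0$; and, as in Theorem~\ref{qCC}, the hypothesis ``$\ell\nmid n$ or $q\mid n$'' serves to keep us inside the range treated in \cite{ZarhinM}. Finally, since $J(C_{f,q})$ is ordinary and the $p$-rank is additive and invariant under isogeny, every $P_j$ is an ordinary abelian variety.

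I would then compute $\End^0(P_j)$ by applying Theorems~\ref{generalA} and~\ref{ordinary} with the cyclotomic ring $\Z[\zeta_{\ell^j}]$ in place of $\Z[\zeta_\ell]$ (the arguments of Sections~\ref{pfGA} and~\ref{Pordinary} going through for an arbitrary cyclotomic ring of $\ell$-power conductor). The torsion-point input these theorems require is furnished by the $\Gal(K)$-module $J(C_{f,q})[\ell]$: via the action of $\delta_q$ and the cyclotomic relations it satisfies, this module contains a subquotient isomorphic to the permutation module $(\F_{\ell}^{\RR_f})^{0}$ of \eqref{heart0} when $\ell\nmid n$, and to the quotient of $(\F_{\ell}^{\RR_f})^{0}$ by its one-dimensional submodule of constant functions when $\ell\mid n$ (an analogue of Lemma~\ref{fixedJ}); by the representation theory recalled in Section~\ref{one}, this module is a \emph{very simple} $\Sn$- or $\An$-module as soon as $n\ge 5$ and $(\ell,n)\ne(5,5)$. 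Granting that this is enough to verify the hypotheses of Theorems~\ref{generalA} and~\ref{ordinary} for each $P_j$, I obtain $\End(P_j)=\Z[\zeta_{\ell^j}]$ and hence $\End^0(P_j)=\Q(\zeta_{\ell^j})$.

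It remains to reassemble. Since $\End^0(P_j)$ is a field, $P_j$ is a simple abelian variety; since the degrees $[\Q(\zeta_{\ell^j}):\Q]=\ell^{j-1}(\ell-1)$ are pairwise distinct, the $P_j$ are pairwise non-isogenous, so $\Hom^0(P_j,P_k)=0$ for $j\ne k$ and therefore $\End^0(J(C_{f,q}))\cong\prod_{j=1}^{r}\Q(\zeta_{\ell^j})$, a commutative semisimple $\Q$-algebra. Comparing $\Q$-dimensions with the subalgebra $\Q[\delta_q]\subset\End^0(J(C_{f,q}))$ from Section~\ref{superqdelta} then yields $\End^0(J(C_{f,q}))=\Q[\delta_q]\cong\prod_{j=1}^{r}\Q(\zeta_{\ell^j})$, as claimed.

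The crux --- and the step I expect to be the main obstacle --- is the verification underlying the second paragraph, that the new parts $P_j$ genuinely satisfy the hypotheses of Theorems~\ref{generalA} and~\ref{ordinary}. The idempotents $e_j$ are not $\ell$-integral: their denominators are powers of $\ell$, because $\ell$ is totally ramified in $\Z[\zeta_{\ell^j}]$ and $\F_{\ell}[\delta_q]$ is a non-semisimple local ring; consequently $P_j[\ell]$ is not literally a direct summand of $J(C_{f,q})[\ell]$, and the comparison of these $\Gal(K)$-modules has to be carried out at the level of $\ell$-adic Tate modules, or by tracking the explicit very simple subquotient through the construction. Moreover, in characteristic $p$ the characteristic-$0$ rigidity that pins $\End^0(P_j)$ down to $\Q(\zeta_{\ell^j})$ --- in the spirit of \cite{ZarhinM,ZarhinCamb} --- is available only after lifting to characteristic $0$, which is exactly why each $P_j$ must first be shown to be ordinary and why the canonical-lifting mechanism of Theorem~\ref{ordinary} is indispensable here; the two cases $\ell\nmid n$ and $\ell\mid n$ also require the appropriate choice of very simple subquotient.
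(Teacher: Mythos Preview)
Your overall architecture is exactly the paper's: split $J(C_{f,q})$ up to $K$-isogeny into pieces indexed by $j=1,\dots,r$, verify that each piece satisfies the hypotheses of Theorem~\ref{ordinary} (with $q$ replaced by $\ell^{j}$), conclude that each piece is absolutely simple with $\End^0\cong\Q(\zeta_{\ell^j})$, observe that distinct $j$ give non-isogenous pieces, and finish by the isogeny decomposition plus a $\Q$-dimension count against $\Q[\delta_q]$.

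The difference lies precisely at the step you flag as the crux. The paper does \emph{not} manufacture the pieces from the idempotents of $\Q[\delta_q]$ and then try to read off their $\ell$-torsion; instead it invokes the explicit abelian subvarieties $J^{(f,\ell^j)}\subset J(C_{f,\ell^j})$ constructed in \cite{ZarhinM}, which come packaged with (a) a $K$-isogeny $\prod_{j=1}^{r}J^{(f,\ell^j)}\to J(C_{f,q})$, (b) an embedding $\Z[\zeta_{\ell^j}]\hookrightarrow\End_K(J^{(f,\ell^j)})$ sending $\zeta_{\ell^j}$ to an automorphism $\delta$ with $\Phi_{\ell^j}(\delta)=0$, (c) the eigenvalue multiplicities of $\delta^{*}$ on $\Omega^1$, and (d) when $\ell\nmid n$, an identification of the $\Gal(K)$-module $(J^{(f,\ell^j)})^{\delta}$ with $(\F_{\ell}^{\RR_f})^{0}$ on the nose (not merely as a subquotient). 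Properties (c) and (d) are exactly hypotheses (1) and (2) of Theorem~\ref{ordinary}, so that theorem applies directly; your worry about non-$\ell$-integral idempotents and about having only a subquotient of the right shape simply does not arise. In short, the content you are missing is supplied wholesale by the citation to \cite{ZarhinM}.

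One further simplification you overlook: the paper disposes of the case $\ell\mid n$ at the outset via Remark~\ref{simplify}(iii) (an elementary substitution reduces to $\ell\nmid n$), so the alternative ``quotient of $(\F_{\ell}^{\RR_f})^{0}$ by constants'' never enters. With that reduction in hand and with $J^{(f,\ell^j)}$ in place of your $P_j$, your third paragraph is verbatim the paper's conclusion.
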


\begin{rems}
\label{simplify}
\begin{itemize}

\item[(i)]
Replacing $K$ by its {\sl perfectization}  $\sqrt[p^{\infty}]{K}$, we may and will assume in
the course of the proof of Theorems \ref{mainordinary} and \ref{mainordinaryq} that $K$ is a {\sl perfect}
field.
\item[(ii)]
Replacing $K$ by a suitable finite abelian extension,
we may and will assume in the course of the proof of Theorems
\ref{mainordinary} and \ref{mainordinaryq}  that $\Gal(f)=\An$
 and $K$ contains a {\sl primitive} $\ell$th  (respectively $q$th) root of unity.
\item[(iii)] Using an elementary substitution (see \cite[Remark 4.3]{ZarhinM}), we may and will assume in the course of the proof of Theorems
\ref{mainordinary}  and \ref{mainordinaryq} that $\ell$ does {\sl not} divide $n$.
\end{itemize}
\end{rems}

\begin{ex}
\label{exOrdinary}
Let us choose  positive integers $q\ge 2$ and $d$, and assume that
either $q\ge 7$ or $d\ge 2$.  Pick an odd prime $p$ that is
congruent to $1$ modulo $q(dq-1)$ and put $n=dq$; clearly, $n
\ge 6$. Let $K=\bar{\F}_{p}(t)$ be the field of rational functions in
one variable over an algebraic closure $\bar{\F}_{p}$ of the (finite)
prime field $\F_{p}$ of characteristic $p$.  Since $p>n$, the product
$n(n-1)$ is {\sl not} divisible by $p$. This implies that the polynomial
$f(x)=x^n-x-t \in K[x]$ is a {\sl Morse function} in a sense of
\cite[p. 39]{SerreGalois}; in particular, $\Gal(f)=\ST_n$ \cite[Th.
4.4.5]{SerreGalois}. Then the Jacobian $J$ of the
superelliptic  $K$-curve $C: y^q=x^n-x-t$ is an {\sl ordinary} abelian
variety over $K$. In order to prove that, it suffices to check that the
Jacobian $J_0$ of its specialization $$C_0: y^q=x^n-x=x^{dq}-x$$ (at
$t=0$) is an ordinary abelian variety over $\bar{\F}_{p}$. Dividing
the equation of $C_0$ by $x^n$ and using a substitution
$$u=\frac{y}{x^d}, \ v =\frac{1}{x},$$
we obtain that $C_0$ is birationally isomorphic to the curve
$$C^{\prime}: u^q=1-v^{dq-1}.$$
Clearly, $C^{\prime}$ is covered by  Fermat curve
$$w^{m}=1-z^{m}$$
with $m:=q(dq-1)$.
 Since $p-1$ is divisible by $q(dq-1)=m$, the Jacobian of the
Fermat curve is ordinary \cite[Prop. 4.1 and Th. 4.2]{Yui}. Since
$J_0$ is isomorphic to a quotient of the Jacobian of the Fermat curve,
it is also ordinary. This implies that $J$ is ordinary. Now if
$q=\ell$ is an odd prime then $n=dp \ge 2\ell\ge 6$ and the
equality  $\End(J(C_{f,\ell}))=\Z[\zeta_{\ell}]$  follows from
Theorem \ref{mainordinary}. (Compare with \cite{ZarhinH} where the case of
$\ell=3, n=9$ is discussed.) 

 Similarly, if $r\ge 2$ is an integer and  $q=\ell^r$ is a power of an odd prime $\ell$
then it follows from Theorem \ref{mainordinaryq} that
$$\End^0(J(C_{f,\ell}))\cong \oplus_{j=1}^r \Q(\zeta_{\ell^i}).$$
\end{ex}

\section{The plan of the proof}
\label{PlanO}

Our proof is based on {\sl Serre-Tate canonical lifting of ordinary abelian varieties}  \cite{Messing} and the following assertions.

\begin{thm}
\label{generalA}
Suppose that $\ell$ is a prime, $r$ a positive integer, $n\ge 5$ a positive integer that is not divisible by $\ell$,
$q=\ell^r$, and
 $\KK$ be a field of characteristic zero that contains a primitive $q$th root of unity, say  $\zeta$. Let $\YY$ be a positive-dimensional abelian variety over $\KK$ that admits an endomorphism (actually an automorphism) $\delta_{\KK}\in \End_{\KK}(\YY)$   satisfying the $q$th cyclotomic equation
\begin{equation}
\label{cyclq}
\Phi_q(\delta_{\KK})=\sum_{i=0}^{\ell-1}\delta_{\KK}^{i \ell^{r-1}}=0
\end{equation}
in $\End_{\KK}(\YY)$. 
Let us consider the $\KK$-linear map
$$\delta_{\KK}^{*}: \Omega^1(\YY) \to \Omega^1(\YY)$$
induced by $\delta_{\KK}$. 

Suppose that $\YY$,  $\delta_{\KK}$ and  $\delta_{\KK}^{*}$ enjoy the following properties.

\begin{enumerate}
\item[(1)]
All eigenvalues of $\delta_{\KK}^{*}$ are primitive $q$th roots of unity and their multiplicities are as follows.
For each positive integer $i$ with
$$1 \le i < q, \ (i,\ell)=1,$$
 the multiplicity of $\zeta^{-i}$ as an eigenvalue of $\delta_{\KK}^{*}$ is $[ni/q]$. In particular,
 $\zeta^{-i}$ is an eigenvalue of $\delta_{\KK}^{*}$ if and only if $[ni/q]>0$.  
\item[(2)]
The $\Gal(\KK)$-submodule of fixed points of $\delta_{\KK}$
$$\YY^{\delta_{\KK}}:=\{y \in \YY(\bar{\KK})\mid \delta_{\KK} (y)=y\}\subset \YY(\bar{\KK})$$
is a $(n-1)$-dimensional vector space over the prime finite field $\F_{\ell}$, the image of 
$\Gal(\KK)$ in $\Aut_{\F_{\ell}}(\YY^{\delta_{\KK}})$ contains a subgroup isomorphic to  $\An$, and the corresponding $\An$-module $\YY^{\delta}$ 
is isomorphic to the natural representation of  $\An$ in
$$\left(\F_{\ell}^n\right)^0:=\{(a_1, \dots, a_n)\in \F_{\ell}^n\mid \sum_{i=1}^n a_i=0\}.$$
\end{enumerate}

Then the following assertions  are valid.

\begin{itemize}

\item[(a)]
$\delta_{\KK}^q=1_{\YY}$.
\item[(b)]
The ring homomorphism 
\begin{equation}
\label{embY}
\mathrm{i}_{\YY}: \Z[\zeta_q] \to \End_{\KK}(\YY)\subset \End(\YY)
\end{equation}
that sends $1$ to $1_{\YY}$ and $\zeta_q$ to $\delta_{\KK}$ is a ring embedding. 
\item[(c)]
$\varphi(q):=[\Q(\zeta_q):\Q]$ divides $2\dim(\YY)$.
\item[(d)]
The spectrum of the linear operator $\delta_{\KK}^{*}$ consists of more that $\varphi(q)$ distinct eigenvalues.
\item[(e)]
$$\mathrm{i}_{\YY}(\Z[\zeta_q])=\End_{\KK}(\YY)= \End(\YY),$$
i.e.,
$\End(\YY)$ coincides with its own subring generated by $\delta_{\KK}$ and the ring homomorphism $i_{\YY}$
 is a ring isomorphism. In particular, $\YY$ is absolutely simple and $\End^0(\YY) \cong \Q(\zeta_q)$.
 \end{itemize}
\end{thm}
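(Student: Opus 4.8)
The plan is to dispose of (a), (b), (d) by formal arguments, and then to concentrate on (e); part (c) will drop out of the proof of (e). For (a): since $\Phi_q(x)\mid x^q-1$ in $\Z[x]$, say $x^q-1=\Phi_q(x)g(x)$, equation \eqref{cyclq} gives $\delta_{\KK}^q-1_{\YY}=g(\delta_{\KK})\Phi_q(\delta_{\KK})=0$, so $\delta_{\KK}$ is an automorphism. For (b): by \eqref{cyclq} the homomorphism $\Z[x]\to\End_{\KK}(\YY)$, $x\mapsto\delta_{\KK}$, factors through $\Z[x]/(\Phi_q)=\Z[\zeta_q]$; since $\Phi_q$ is irreducible over $\Q$, the induced map $\Q(\zeta_q)\to\End^{0}_{\KK}(\YY)$ has trivial kernel (the only alternative, the whole field, is impossible as $1_{\YY}\ne0$), so $\mathrm{i}_{\YY}$ is an embedding and $\Q[\delta_{\KK}]\cong\Q(\zeta_q)$, $\Z[\delta_{\KK}]\cong\Z[\zeta_q]$. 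For (d): by (1) the distinct eigenvalues of $\delta_{\KK}^{*}$ are exactly the $\zeta^{-i}$ with $1\le i<q$, $(i,\ell)=1$ and $ni\ge q$; an elementary estimate shows that, because $n\ge5$, the number of excluded $i$ (those with $i<q/n$) is smaller than $\varphi(q)/2$, so more than $\varphi(q)/2$ of the $\zeta^{-i}$ occur.

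For (e), put $E=\Q(\zeta_q)$, identify $\Z[\zeta_q]$ with $\OC:=\Z[\delta_{\KK}]$, let $\mm=(1-\delta_{\KK})$ be the unique prime of $\OC$ over $\ell$, and set $\OC_{\ell}:=\OC\otimes\Z_{\ell}$ --- a discrete valuation ring with uniformizer $1-\delta_{\KK}$ and residue field $\F_{\ell}$, since $\ell$ is totally ramified in $\Z[\zeta_q]$. The key external input is that, for $n\ge5$, the natural module $\left(\F_{\ell}^n\right)^0$ is a faithful \emph{very simple} $\An$-module, in the sense used to pin down endomorphism algebras of abelian varieties (see the results of Section \ref{endomS}). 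Because $\delta_{\KK}$ is $\KK$-rational, $\OC_{\ell}$ acts $\Z_{\ell}$-linearly on $T_{\ell}\YY$, which --- being finitely generated and torsion-free over the discrete valuation ring $\OC_{\ell}$ --- is free, $T_{\ell}\YY\cong\OC_{\ell}^{\,m}$. The cokernel of multiplication by $1-\delta_{\KK}$ on $T_{\ell}\YY$ is $(\OC_{\ell}/\mm)^m=\F_{\ell}^m$; feeding $0\to T_{\ell}\YY\to V_{\ell}\YY\to\YY[\ell^{\infty}]\to0$ through the snake lemma (note $1-\delta_{\KK}$ is invertible in $\End^{0}(\YY)$ by \eqref{cyclq}) identifies this cokernel with $\YY^{\delta_{\KK}}$. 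By (2) the latter has $\F_{\ell}$-dimension $n-1$, so $m=n-1$ and $2\dim(\YY)=(n-1)\varphi(q)$, which gives (c). Also $T_{\ell}\YY/\mm\,T_{\ell}\YY\cong\YY^{\delta_{\KK}}$ is the natural module $\left(\F_{\ell}^n\right)^0$ for the image $G\supseteq\An$ of $\Gal(\KK)$.

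Next I would study the centralizer $C:=Z_{\End_{\bar{\KK}}(\YY)}(\delta_{\KK})\supseteq\OC$. Every element of $C\otimes\Z_{\ell}$ is $\OC_{\ell}$-linear on $T_{\ell}\YY=\OC_{\ell}^{\,n-1}$, so $C\otimes\Z_{\ell}\hookrightarrow M_{n-1}(\OC_{\ell})$; as $1-\delta_{\KK}$ is a non-zero-divisor there, reduction modulo $\mm$ gives an embedding $(C\otimes\Z_{\ell})/\mm(C\otimes\Z_{\ell})\hookrightarrow M_{n-1}(\F_{\ell})=\End_{\F_{\ell}}\!\left(\left(\F_{\ell}^n\right)^0\right)$ whose image is an $\F_{\ell}$-subalgebra containing $\F_{\ell}\cdot 1$ and stable under $G$-conjugation (a $\Gal(\KK)$-conjugate of an element of $C$ lies in $C$, and reduction intertwines the two conjugations). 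Very-simplicity forces this image to be $\F_{\ell}\cdot 1$ or all of $M_{n-1}(\F_{\ell})$, so by Nakayama $C\otimes\Z_{\ell}=\OC_{\ell}$ or $C\otimes\Z_{\ell}=M_{n-1}(\OC_{\ell})$. The second case would give $\End^{0}(\YY)\supseteq M_{n-1}(E)$, hence $\YY\sim\YY_0^{\,n-1}$ with $\YY_0$ of CM type for $E$, forcing every multiplicity of $\delta_{\KK}^{*}$ on $\Omega^{1}(\YY)$ to be $0$ or $n-1$ --- contrary to (1), since the multiplicities $[ni/q]$ take values strictly between $0$ and $n-1$ ($n\ge5$). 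So $C\otimes\Z_{\ell}=\OC_{\ell}$, $C$ is an order of $E$ containing the maximal order $\Z[\zeta_q]$, i.e. $C=\Z[\zeta_q]$. Thus $E$ is its own centralizer in the semisimple algebra $\End^{0}(\YY)$, which is therefore simple, $\End^{0}(\YY)=M_k(D)$ with centre $Z\subseteq E$ and $E$ a maximal subfield. To conclude one invokes the Hodge data: decomposing $\Omega^{1}(\YY)$ over $Z\otimes\bar{\KK}$ and restricting each simple block to the split torus $E\otimes_Z\bar{\KK}$ shows that $i\mapsto[ni/q]$ is constant on the cosets of $\Gal(E/Z)\le(\Z/q)^{*}$; combining this with (d) and with $[ni/q]\ne[n(q-i)/q]$ for some $i$ (ruling out $\Gal(E/Z)=\{\pm1\}$) forces $Z=E$, hence $D=E$, $k=1$, and $\End^{0}(\YY)=\Q(\zeta_q)$. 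Then $\End(\YY)$ is an order of $\Q(\zeta_q)$ containing $\Z[\zeta_q]$, so $\End(\YY)=\Z[\zeta_q]=\mathrm{i}_{\YY}(\Z[\zeta_q])$, and $\YY$ is absolutely simple because $\End^{0}(\YY)$ is a field; this proves (e).

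The main obstacle, as anticipated, is the passage to the \emph{full} endomorphism ring $\End_{\bar{\KK}}(\YY)$: a $\bar{\KK}$-endomorphism need not commute with $\Gal(\KK)$ on $T_{\ell}\YY$ --- only its $\Gal(\KK)$-orbit is stable --- so an ordinary Schur argument does not suffice, and one really needs the \emph{very simple} property of $\left(\F_{\ell}^n\right)^0$, a nontrivial fact about modular representations of $\An$ that genuinely requires $n\ge5$. The secondary delicate point is eliminating the remaining matrix- and division-algebra possibilities for $\End^{0}(\YY)$, which rests entirely on the arithmetic of the multiplicities $[ni/q]$ from (1)/(d) together with the structural facts on CM types recorded in Section \ref{endomS}; this is precisely where degenerate small cases (compare the $(q,n)=(5,5)$ excluded from Theorems \ref{mainordinary} and \ref{mainordinaryq}) would have to be handled separately.
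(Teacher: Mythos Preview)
Your proposal follows essentially the same architecture as the paper's proof: (a)--(c) by the same formal arguments; (d) by the same elementary count (the paper cites \cite{ZarhinM}, pp.~357--358, for this); and (e) via very-simplicity of the $\An$-module $(\F_\ell^n)^0$ (the paper cites \cite[Th.~4.7]{ZarhinCrelle}), yielding a dichotomy for the centralizer $\End^0(\YY,\mathrm i_{\YY})$ --- either it equals $E=\Q(\zeta_q)$ or it is central simple over $E$ of dimension $(n-1)^2$ --- then ruling out the large case using the eigenvalue multiplicities on $\Omega^1(\YY)$, and finally passing from ``$E$ is its own centralizer'' to $\End^0(\YY)=E$. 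Your Tate-module/Nakayama derivation of the dichotomy is exactly the content of what the paper packages as \cite[Th.~3.8]{ZarhinM}.

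Two points in your write-up need tightening. First, in the large-centralizer case you assert $\YY\sim\YY_0^{\,n-1}$ with $\YY_0$ of CM type by $E$; this would require the central simple $E$-algebra to be split, which you have not shown (you only know it splits at $\lambda$). The paper's argument avoids this: each $\delta_{\KK}^*$-eigenspace is a module over $\End^0(\YY,\mathrm i_{\YY})\otimes_{E,\gamma}\bar{\KK}\cong M_{n-1}(\bar{\KK})$, hence has $\bar{\KK}$-dimension divisible by $n-1$, so every nonzero multiplicity $[ni/q]$ equals $n-1$; then \eqref{iqmi} forces at most $\varphi(q)/2$ eigenvalues, contradicting (d). Your conclusion is right, but the route through an isogeny decomposition is not justified as stated.

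Second, and more seriously, your final step --- from ``$E$ is its own centralizer'' to $Z=E$ --- is incomplete. You correctly observe that the multiplicities $i\mapsto[ni/q]$ must be constant on $\Gal(E/Z)$-cosets in $(\Z/q)^*$, but you only explicitly exclude $\Gal(E/Z)=\{\pm1\}$. The group $(\Z/q)^*$ has many other subgroups, and ``combining with (d)'' does not by itself dispose of them. The paper closes this gap by invoking \cite[Cor.~2.2]{ZarhinM}, whose content is precisely that hypothesis (d) (more than $\varphi(q)/2$ distinct eigenvalues) forces $E=\Q[\delta_{\KK}]$ to coincide with the center of $\End^0(\YY)$. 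You should either cite that result or supply the full argument that the multiplicity pattern $i\mapsto[ni/q]$ is incompatible with constancy on cosets of \emph{any} nontrivial subgroup of $(\Z/q)^*$.
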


We use Theorem \ref{generalA} in order to prove the following analogue of Theorem \ref{generalA} in finite characteristic.

\begin{thm}
\label{ordinary}
Suppose that  $\ell$ is a prime, $r$ a positive integer, and $n\ge 5$ is a positive integer that is not divisible by $\ell$. 
Let us put $q:=\ell^r$.
Let $K$ be a perfect field of prime characteristic $p\ne \ell$ that contains a primitive $q$th root of unity, say   $\bar{\zeta}$. 
Let $Y$ be a positive-dimensional ordinary abelian variety over $K$ that admits an endomorphism (actually an automorphism) $\delta\in \End_K(Y)$   satisfying the $q$th cyclotomic equation
\begin{equation}
\label{cyclqO}
\Phi_q(\delta)=\sum_{i=0}^{\ell-1}\delta^{i \ell^{r-1}}=0
\end{equation}
in $\End_K(Y)$.  Let us consider the $K$-linear map
$$\delta^{*}: \Omega^1(Y) \to \Omega^1(Y)$$
induced by $\delta$.

Suppose that  $Y$, $\delta$ and $\delta^{*}$ enjoy the following properties.

\begin{enumerate}
\item[(1)]
All eigenvalues of $\delta^{*}$ are primitive $q$th roots of unity and their multiplicities are as follows.
If $i$ is a  positive integer with
$$1 \le i < q, \ (i,\ell)=1,$$
then the multiplicity of $\bar{\zeta}^{-i}$ as an eigenvalue of $\delta^{*}$ is $[ni/q]$. In particular,
 $\bar{\zeta}^{-i}$ is an eigenvalue of $\delta^{*}$ if and only if $[ni/q]>0$.  
\item[(2)]
The $\Gal(K)$-submodule of fixed points of $\delta$
$$Y^{\delta}:=\{y \in Y(\bar{\KK})\mid \delta (y)=y\}\subset Y(\bar{\KK})$$
is a $(n-1)$-dimensional vector space over the prime finite field $\F_{\ell}$, the image of 
$\Gal(K)$ in $\Aut_{\F_{\ell}}(Y^{\delta})$ contains a subgroup isomorphic to  $\An$, and the corresponding $\An$-module $Y^{\delta}$ is isomorphic to 
the natural representation of $\An$ in
$$\left(\F_{\ell}^n\right)^0=\{(a_1, \dots, a_n)\in \F_{\ell}^n\mid \sum_{i=1}^n a_i=0\}.$$
\end{enumerate}

Then the following assertions are valid.

\begin{itemize}
\item[(a)]
$\delta_K^q=1_Y$.
\item[(b)]
The ring homomorphism 
\begin{equation}
\label{embYB}
\mathrm{i}_Y: \Z[\zeta_q] \to \End_K(Y)\subset \End(Y)
\end{equation}
that sends $1$ to $1_Y$ and $\zeta_q$ to $\delta$ is a ring embedding. 
\item[(c)]
$\varphi(q)=[\Q(\zeta_q):\Q]$ divides $2\dim(Y)$.
\item[(d)]
The spectrum of the linear operator $\delta^{*}$ consists of more that $\varphi(q)$ distinct eigenvalues.
\item[(e)]
$$\mathrm{i}_Y(\Z[\zeta_q])=\End_K(Y)= \End(Y),$$
i.e.,
$\End(Y)$ coincides with its own subring generated by $\delta$ and the ring homomorphism $i_Y$
 is a ring isomorphism. In particular, $Y$ is absolutely simple and $\End^0(Y) \cong \Q(\zeta_q)$.
 \end{itemize}
\end{thm}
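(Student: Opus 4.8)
The plan is to deduce Theorem~\ref{ordinary} from Theorem~\ref{generalA} by means of the Serre--Tate canonical lift. Since $Y$ is an ordinary abelian variety over the perfect field $K$ of characteristic $p$, it admits a canonical lift $\widetilde{Y}$ to an abelian scheme over the Witt vectors $W(K)$ (or, after choosing an embedding, over a suitable characteristic-zero field $\KK$); the functoriality of the canonical lift means that every element of $\End_K(Y)$ lifts uniquely to an endomorphism of $\widetilde{Y}$, so that the reduction map $\End_{\KK}(\widetilde Y)\to \End_K(Y)$ is an isomorphism of rings. In particular $\delta$ lifts to an automorphism $\delta_{\KK}$ of $\widetilde Y$ satisfying the very same cyclotomic equation \eqref{cyclq}, and $\KK$ (the fraction field of $W(K)$, or an algebraically convenient extension thereof) contains a primitive $q$th root of unity $\zeta$ reducing to $\bar\zeta$. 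Thus I would first set up $\widetilde Y$, $\delta_{\KK}$, $\KK$, and record that $\dim(\widetilde Y)=\dim(Y)$ and $\End_{\KK}(\widetilde Y)\cong\End_K(Y)$, and also that $\End(\widetilde Y)\cong\End(Y)$ using the same statement for the base change to an algebraic closure (here one uses that $Y_{\bar K}$ is again ordinary and its canonical lift is $\widetilde Y_{\bar\KK}$).

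The second step is to verify that the hypotheses (1) and (2) of Theorem~\ref{generalA} hold for $(\widetilde Y,\delta_{\KK},\delta_{\KK}^{*})$, given that they hold for $(Y,\delta,\delta^{*})$. For hypothesis (1): the cotangent space $\Omega^1(\widetilde Y)$ is a free $W(K)$-module whose reduction mod $p$ is $\Omega^1(Y)$, compatibly with the action induced by $\delta_{\KK}$ and $\delta$; since $p\ne\ell$ and $\delta_{\KK}^{*}$ satisfies $\Phi_q(\delta_{\KK}^{*})=0$ coming from \eqref{cyclq}, its minimal polynomial divides $\Phi_q$, so $\delta_{\KK}^{*}$ is diagonalizable with eigenvalues among the primitive $q$th roots of unity, and the multiplicity of each eigenvalue is unchanged under reduction mod $p$ (because the $p$th roots of unity in the relevant eigenvalue set stay distinct mod $p$, $p\ne\ell$, and the eigenspace decomposition lifts). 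Hence the multiplicity of $\zeta^{-i}$ on $\Omega^1(\widetilde Y)$ equals that of $\bar\zeta^{-i}$ on $\Omega^1(Y)$, namely $[ni/q]$. For hypothesis (2): the $\ell$-torsion of an ordinary abelian variety lifts isomorphically, as Galois modules after the appropriate identification, to the $\ell$-torsion of its canonical lift — more precisely, $Y[\ell](\bar K)\cong \widetilde Y[\ell](\bar\KK)$ compatibly with the Galois actions via the canonical surjection $\Gal(\KK)\twoheadrightarrow\Gal(K)$ (here one uses $\ell\ne p$, so $Y[\ell]$ is étale, together with the rigidity of the formal/étale decomposition for ordinary reduction). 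Under this identification the submodule $\widetilde Y^{\delta_{\KK}}$ corresponds to $Y^{\delta}$, so it is again the $(n-1)$-dimensional module $(\F_\ell^n)^0$ with image of Galois containing $\An$ and acting via the natural representation. I would phrase this using the material promised in Sections~\ref{Aschemes} and Remark~\ref{ordinarylift}.

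Once (1) and (2) are checked, Theorem~\ref{generalA} applies verbatim to $(\widetilde Y,\delta_{\KK})$ and yields assertions (a)--(e) for $\widetilde Y$: in particular $\mathrm{i}_{\widetilde Y}(\Z[\zeta_q])=\End_{\KK}(\widetilde Y)=\End(\widetilde Y)\cong\Q(\zeta_q)$ rationally, $\widetilde Y$ is absolutely simple, $\delta_{\KK}^q=1$, and $\varphi(q)\mid 2\dim(\widetilde Y)$, the spectrum of $\delta_{\KK}^{*}$ having more than $\varphi(q)$ distinct elements. The final step is to transport each of these back to $Y$ through the reduction isomorphism $\End_{\KK}(\widetilde Y)\xrightarrow{\sim}\End_K(Y)$ and $\End(\widetilde Y)\xrightarrow{\sim}\End(Y)$, which carries $\delta_{\KK}$ to $\delta$ and hence $\mathrm{i}_{\widetilde Y}$ to $\mathrm{i}_Y$; this immediately gives (a), (b), (e), and then (c) follows from $\dim(Y)=\dim(\widetilde Y)$, while (d) follows because $\delta^{*}$ and $\delta_{\KK}^{*}$ have the same multiset of eigenvalues (up to the bijection $\zeta\mapsto\bar\zeta$ on $q$th roots of unity) by the multiplicity computation in step two. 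Absolute simplicity of $Y$ also descends, since a nontrivial abelian subvariety of $Y$ would, again by functoriality of the canonical lift, lift to one of $\widetilde Y$. The main obstacle, and the point requiring the most care, is step two — establishing the compatibility of the canonical lift with both the cotangent action (to preserve the eigenvalue multiplicities exactly) and with the $\ell$-adic Galois representation on $Y[\ell]$ (to preserve hypothesis (2) including the $\An$-action) — so I would devote most of the write-up to making those two compatibilities precise, citing the Serre--Tate theory as developed in \cite{Messing} and \cite{Katz} and the standard behaviour of Lie algebras and torsion under the canonical lift.
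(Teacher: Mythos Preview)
Your proposal is correct and follows essentially the same route as the paper: lift $Y$ to its Serre--Tate canonical lift $\YY$ over the fraction field $\KK$ of $W(K)$, use the compatibilities of Theorems~\ref{specOmega} and~\ref{GaloisDelta} (exactly the cotangent-eigenvalue and $\ell$-torsion Galois-module compatibilities you describe) to verify the hypotheses of Theorem~\ref{generalA} for $(\YY,\delta_{\KK})$, apply that theorem, and then transport (a)--(e) back along the canonical isomorphisms $\End_K(Y)\cong\End_S(\mathcal{Y})\cong\End_{\KK}(\YY)$ and their geometric counterparts. Your identification of the two compatibility checks as the crux of the argument, and your justification of $\End(Y)\cong\End(\YY)$ via functoriality of the canonical lift under base change to $\bar K$, match the paper's reasoning (the paper packages these compatibilities as the results of Section~\ref{Aschemes} and Remark~\ref{ordinarylift}).
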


\begin{proof}[Proof of Theorem \ref{mainordinary} (modulo Theorem \ref{ordinary})]
In light of Remarks \ref{simplify}, we may and will assume that $K$ is a perfect field containing a primitive $\ell$th root of unity, $\Gal(f)=\An$
and $\ell$ does {\sl not} divide $n$.
Let us put 
$$q=\ell, \ Y=J(C_{f,\ell}), \ \delta=\delta_{\ell}.$$
Let us choose an order on the $n$-element set $
\RR_f$ that defines a bijection between $\RR_f$ and $\{1,2, \dots n\}$. This gives us a  group isomorphism  $\Gal(f)\cong\An$ and an isomorphism of the corresponding $\An$-modules $\left(\F_{\ell}^{\RR_f}\right)^{0}$ and $\left(\F_{\ell}^n\right)^0$.
Lemma \ref{fixedJ} and the assertions in the beginning of Subsection \ref{multOmega}  about eigenvalues of $\delta_{\ell}^*$ imply that
all the conditions of Theorem  \ref{ordinary} are fulfilled.
Now the desired result follows from Theorem \ref{ordinary}.
\end{proof}

\section{Endomorphism algebras of abelian varieties}
\label{endomS}

\begin{sect}{\bf Endomorphisms and torsion points.}
\label{endom}
 Let $X$ be an abelian variety of positive dimension
over an arbitrary field $K$.
 If $n$ is a positive
integer that is not divisible by $\fchar(K)$ then $X[n]$ stands for
the kernel of multiplication by $n$ in $X(\bar{K})$. It is well-known
\cite{MumfordAV} that  $X[n]$ is a free $\Z/n\Z$-module of rank
$2\dim(X)$. In particular, if $n=\ell$ is a prime then $X[\ell]$
is a $2\dim(X)$-dimensional $\F_{\ell}$-vector space.

 Since $X$ is defined over $K$,  $X[n]$ is a Galois
submodule in $X(\bar{K})$ and all points of $X[n]$ are defined over a
finite separable extension of $K$. We write
${\rho}_{n,X,K}:\Gal(K)\to \Aut_{\Z/n\Z}(X[n])$ for the
corresponding homomorphism defining the structure of the Galois
module on $X[n]$,
$${G}_{n,X,K}\subset
\Aut_{\Z/n\Z}(X[n])$$ for its image ${\rho}_{n,X,K}(\Gal(K))$
and $K(X[n])$ for the field of definition of all points of $X[n]$.
Clearly, $K(X[n])$ is a finite Galois extension of $K$ with Galois
group $\Gal(K(X[n])/K)={G}_{n,X,K}$. If $n=\ell$  then we get
a natural faithful linear representation
$${G}_{\ell,X,K}\subset \Aut_{\F_{\ell}}(X[\ell])$$
of ${G}_{\ell,X,K}$ in the $\F_{\ell}$-vector space
$X[\ell]$.

Since $X$ is defined over
$K$, one may associate with every $u \in \End(X)$ and $\sigma \in
\Gal(K)$ an endomorphism $^{\sigma}u\ \in \End(X)$ such that
$^{\sigma}u (x)=\sigma u(\sigma^{-1}x)$ for all $x \in X(\bar{K})$. We
get the  group homomorphism
\begin{equation}
\label{kappaX}
\kappa_{X}: \Gal(K) \to \Aut(\End(X));
\quad\kappa_{X}(\sigma)(u)=\ ^{\sigma}u \quad \forall \sigma \in
\Gal(K),u \in \End(X).
\end{equation} It is well-known that $\End_K(X)$
coincides with the subring of $\Gal(K)$-invariants in $\End(X)$,
i.e., $\End_K(X)=\{u\in \End(X)\mid\  ^{\sigma}u\ =u \quad \forall
\sigma \in \Gal(K)\}$. It is also well-known that $\End(X)$
(viewed as a group with respect to addition) is a free commutative
group of finite rank \cite{MumfordAV}.  In addition, $\End_K(X)$ is its {\sl pure} subgroup \cite[Sect. 4, p. 501]{ST},
i.e., the quotient $\End(X)/\End_K(X)$ is also  a free commutative
group of finite rank.
\end{sect}

\begin{sect}{\bf Abelian varieties with multiplications.}
\label{abmult} Let $E$ be a number field. Let $(X, i)$ be a pair
consisting of an abelian variety $X$ of positive dimension over
$\bar{K}$ and an embedding $i:E \hookrightarrow  \End^0(X)$. Here
We also assume that $1\in E$ goes to $1_X$.

\begin{rem}
\label{reldeg}
 It is well known  \cite[Prop. 2 on p.
36]{Shimura2}) that the degree $[E:\Q]$ divides $2\dim(X)$, i.e.
$$d_{X,E}:=\frac{2\dim(X)}{[E:\Q]}$$
is a positive integer.
\end{rem}

Let us denote by $\End^0(X,i)$ the centralizer of $i(E)$ in
$\End^0(X)$. Clearly, $i(E)$ lies in the center of the
finite-dimensional $\Q$-algebra $\End^0(X,i)$. It follows that
$\End^0(X,i)$ carries a natural structure of finite-dimensional
$E$-algebra.

Let $\OC$ be the ring of integers in $E$. If $\a$ is a non-zero
ideal in $\OC$ then the quotient $\OC/\a$ is a finite commutative
ring.
Let $\lambda$ be a maximal ideal in $\OC$. We write $k(\lambda)$
for the corresponding (finite) residue field $\OC/\lambda$ and
$\ell$ for $\fchar(k(\lambda))$.  We have
 $$\OC\supset\lambda\supset \ell\cdot\OC.$$

\begin{rem}
\label{totram} (See \cite[Remark 3.3]{ZarhinMZ2}.) Let us assume
that $\lambda$ is the only maximal ideal of $\OC$ dividing $\ell$,
i.e., $\ell\cdot\OC=\lambda^b$ where the positive integer $b$
satisfies
\newline
$[E:\Q]=b \cdot [k(\lambda):\F_{\ell}]$.

\begin{itemize}
\item[(i)] We have
 $\OC\otimes\Z_{\ell}=\OC_{\lambda}$ where $\OC_{\lambda}$ is
the completion of $\OC$ with respect to $\lambda$-adic topology. 
The ring  $\OC_{\lambda}$ is a local principal ideal domain, its
only maximal ideal is $\lambda\OC_{\lambda}$ and
$$k(\lambda)=\OC/\lambda=\OC_{\lambda}/\lambda\OC_{\lambda}, \
\ell\cdot\OC_{\lambda}=(\lambda\OC_{\lambda})^b.$$

 \item[(ii)]
 Let us choose an element $c \in \lambda$ that does not
lie in $\lambda^2$. Then
$$\lambda=\ell\cdot\OC + c\cdot\OC.$$
This implies that for all positive integers $j\le b$
$$\lambda^j=\ell\cdot\OC + c^j\cdot\OC.$$
This implies that
$$(\lambda\OC_{\lambda})^j=c^j \cdot\OC_{\lambda}.$$
In particular,
$$\ell\cdot\OC_{\lambda}=c^b\cdot\OC_{\lambda}.$$
It follows that
$$c^{-j}\ell\cdot\OC_{\lambda}=c^{b-j}\cdot\OC_{\lambda}.$$
\item[(iii)] Notice that
$E_{\lambda}=E\otimes_{\Q}\Q_{\ell}=\OC\otimes\Q_{\ell}=
\OC_{\lambda}\otimes_{\Z_{\ell}}\Q_{\ell}$ is the field coinciding
with the completion of $E$ with respect to $\lambda$-adic topology.
\end{itemize}
\end{rem}

Suppose that $X$ is defined over $K$ and $i(\OC) \subset
\End_K(X)$. Then we may view elements of $\OC$ as
$K$-endomorphisms of $X$. We write $\End(X,i)$ for the centralizer
of $i(\OC)$ in $\End(X)$ and $\End_K(X,i)$ for the centralizer of
$i(\OC)$ in $\End_K(X)$. Clearly,  
\begin{equation}
\label{EndKi}
\End_K(X,i)=\End(X,i)\bigcap \End_K(X),
\end{equation}
and $\End(X,i)$  and $\End_K(X,i)$ are pure
subgroups of $\End(X)$ and   $\End_K(X)$ respectively, i.e.,  the quotients
$\End(X)/\End(X,i)$ and $\End_K(X)/\End_K(X,i)$ are torsion-free. Recall that $\End(X)/\End_K(X)$ is also torsion-free (see the very end of Subsection \ref{endom}).
It follows from \eqref{EndKi} that  the quotient
 $\End(X,i)/\End_K(X,i)$ embeds into $\End(X)/\End_K(X)$  and therefore is also torsion-free, i.e, $\End_K(X,i)$
is a pure subgroup in $\End(X,i)$.

  We have
$$i(\OC)\subset \End_K(X,i)\subset\End(X,i)\subset \End(X).$$
It is also clear that
$$\End(X,i)=\End^0(X,i)\bigcap \End(X),$$
$$\End^0(X,i)=\End(X,i)\otimes\Q\subset
\End(X)\otimes\Q=\End^0(X).$$
 Clearly, $\End(X,i)$ carries a natural
structure of finitely generated torsion-free $\OC$-module. Since
$\OC$ is a Dedekind ring, it follows that there exist non-zero
ideals $\b_1,\ldots , \b_t$ in $\OC$ such that
$$\Z \cdot 1_X\subset \End(X,i)\cong \b_1\oplus \cdots \oplus \b_t$$
(as $\OC$-modules).

We have $\kappa_{X}(\sigma)(\End(X,i))=\End(X,i)$ for all $\sigma\in
\Gal(K)$, and
$$\End_K(X,i)=\{u\in \End(X,i)\mid\  ^{\sigma}u\ =u \quad \forall
\sigma \in \Gal(K)\},$$
where the group homomorphism $\kappa_{X}: \Gal(K) \to \Aut(\End(X))$
is defined in \eqref{kappaX}.

 Let us assume
that $\fchar(K)$ does not divide the order of  $\OC/\a$ and put
$$X[\a]:=\{x \in X(\bar{K})\mid i(e)x=0 \quad \forall e\in
\a\}.$$
 For example,  assume that $\ell \ne \fchar(K)$.  Then the order
 of $\OC/\lambda=k(\lambda)$ is a power of $\ell$ and
$X[\lambda]\subset X[\ell]$.

 Clearly, $X[\a]$ is a Galois submodule of $X(\bar{K})$. It is
also clear that $X[\a]$ carries a natural structure of
$\OC/\a$-module. (It is known \cite[Prop. 7.20]{Shimura} that this
module is free.)

\begin{rem}
\label{aLambda}
Assume in addition that $\lambda$ is the only
maximal ideal of $\OC$ dividing $\ell$
 and pick
$$c \in \lambda \setminus \lambda^2\subset \lambda \subset
 \OC.$$ By Remark \ref{totram}(ii),  $\lambda$ is
generated by $\ell$ and $c$. Hence
\begin{equation}
\label{number2}
X[\lambda]=\{x\in X_{\ell}\mid cx=0\}\subset X[\ell].
\end{equation}
More generally, for all positive integers $j \le b$ the ideal $\lambda^j$ is generated by $\ell$ and $c^j$, which implies that
\begin{equation}
\label{number2bis}
X[\lambda^j]=\{x\in X_{\ell}\mid c^jx=0\}\subset X[\ell].
\end{equation}
\end{rem}

Obviously, every endomorphism from $\End(X,i)$ leaves invariant
the subgroup $X[\a] \subset X(\bar{K})$ and induces an endomorphism
of the $\OC/\a$-module $X[\a]$. This gives rise to a natural
homomorphism
$$\End(X,i) \to \End_{\OC/\a}(X[\a]),$$
whose kernel contains $\a\cdot\End(X,i)$. Actually, the kernel
coincides with $\a\cdot\End(X,i)$, i. e., there is an embedding
\begin{equation}
\label{n3}
\End(X,i)\otimes_{\OC}\OC/\a \hookrightarrow \End_{\OC/\a}(X[\a]).
\end{equation}
See \cite[pp. 699-700]{ZarhinMZ2} for the proof.

Now we concentrate on the case of $\a=\lambda$, assuming that
$$\ell \ne \fchar(K).$$
Then $X[\lambda]$ carries the natural
structure of a $k(\lambda)$-vector space endowed with the
structure of Galois module and \eqref{n3} gives us the embedding
$$\End(X,i)\otimes_{\OC}k(\lambda)\hookrightarrow\End_{k(\lambda)}(X[\lambda])
. \eqno(4)$$ Further we will identify
$\End(X,i)\otimes_{\OC}k(\lambda)$ with its image in
$\End_{k(\lambda)}(X[\lambda])$. We write
$$\tilde{\rho}_{\lambda,X}:\Gal(K) \to
\Aut_{k(\lambda)}(X[\lambda])$$ for the corresponding
(continuous) homomorphism defining the Galois action on
$X[\lambda]$. It is known \cite[Prop. 7.20] {Shimura} (see also
\cite{Ribet2}) that
\begin{equation}
\label{dimXlambda}
\dim_{k(\lambda)}X[\lambda]= \frac{2\dim(X)}{[E:\Q]}:=d_{X,E}.
\end{equation}

 Let us put
$$\tilde{G}_{\lambda,X}=\tilde{G}_{\lambda,i,X}:=\tilde{\rho}_{\lambda,X}(\Gal(K)) \subset
\Aut_{k(\lambda)}(X[\lambda]).$$ Clearly, $\tilde{G}_{\lambda,X}$
coincides with the Galois group of the field extension
$K(X[\lambda])/K$, where $K(X[\lambda])$ is the field of
definition of all points in $X[\lambda]$.

It is also clear that the image of $\End_K(X,i)
\otimes_{\OC}k(\lambda)$ lies in the centralizer
$\End_{\tilde{G}_{\lambda,i,X}}(X[\lambda])$ of $\tilde{G}_{\lambda,i,X}$ in
$\End_{k(\lambda)}(X[\lambda])$.

\begin{lem}
\label{overK} (See \cite[Lemma 3.8]{ZarhinMZ2}.) Suppose that
$i(\OC)\subset \End_K(X)$. If $\lambda$ is a maximal ideal in $\OC$
with $\ell \ne \fchar(K)$, and
  $\End_{\tilde{G}_{\lambda,i,K}}(X[\lambda])=k(\lambda)$
then $\End_K(X,i)=\OC$.
\end{lem}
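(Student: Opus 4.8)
The plan is to show that $\End_K(X,i)$ --- a finitely generated torsion-free $\OC$-module containing $i(\OC)$, being a submodule of the finitely generated torsion-free $\OC$-module $\End(X,i)$ --- has $\OC$-rank exactly $1$; granting this, $\End_K(X,i)\otimes\Q$ is a one-dimensional $E$-vector space containing $i(E)$, hence equals $i(E)$, and a short integrality argument then forces $\End_K(X,i)=i(\OC)$.

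First I would reduce modulo $\lambda$. Since $\ell=\fchar(k(\lambda))\ne\fchar(K)$, the embedding \eqref{n3} with $\a=\lambda$ gives an injection $\End_K(X,i)\otimes_{\OC}k(\lambda)\hookrightarrow\End_{k(\lambda)}(X[\lambda])$, whose image is contained in the centralizer $\End_{\tilde{G}_{\lambda,i,K}}(X[\lambda])$ of the Galois image, as recorded just before the statement. By hypothesis this centralizer is $k(\lambda)$, so $\dim_{k(\lambda)}\bigl(\End_K(X,i)\otimes_{\OC}k(\lambda)\bigr)\le 1$; since this space contains the image of $i(\OC)$, namely the nonzero line $k(\lambda)\cdot 1$, its dimension is exactly $1$.

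Next I would pass from this to a rank statement over the Dedekind ring $\OC$. Just as for $\End(X,i)$ in the text, the finitely generated torsion-free $\OC$-module $\End_K(X,i)$ is isomorphic to a direct sum $\b_1\oplus\cdots\oplus\b_r$ of nonzero ideals of $\OC$; reducing modulo $\lambda$ gives $\End_K(X,i)\otimes_{\OC}k(\lambda)\cong\bigoplus_{j}\b_j/\lambda\b_j\cong k(\lambda)^{\,r}$, because each $\b_j/\lambda\b_j$ is one-dimensional over $k(\lambda)$. Comparing with the previous paragraph forces $r=1$, so $\End_K(X,i)$ has $\OC$-rank $1$ and $\End_K(X,i)\otimes\Q$ is a one-dimensional $E$-vector space. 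As it contains $i(E)$, we get $\End_K(X,i)\otimes\Q=i(E)$, hence $\End_K(X,i)\subseteq i(E)$. Finally, every element of $\End_K(X,i)\subseteq\End(X)$ is integral over $\Z$ because $\End(X)$ is a finitely generated abelian group, so $\End_K(X,i)$ is contained in the integral closure of $\Z$ in $i(E)$, which is $i(\OC)$. Together with $i(\OC)\subseteq\End_K(X,i)$ this gives $\End_K(X,i)=i(\OC)\cong\OC$, as claimed.

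The only nontrivial ingredient is the injectivity of the reduction map \eqref{n3}: it is what lets the a priori bound on the mod-$\lambda$ centralizer control the $\OC$-rank of $\End_K(X,i)$ (equivalently, the $E$-dimension of its $\Q$-algebra), and it is exactly here that the hypothesis $\ell\ne\fchar(K)$ enters. Everything else is routine bookkeeping with finitely generated modules over a Dedekind domain.
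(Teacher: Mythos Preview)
Your argument is correct. The paper does not supply its own proof of this lemma but simply cites \cite[Lemma~3.8]{ZarhinMZ2}, so there is no in-paper proof to compare against; the route you take---reduce modulo $\lambda$, read off the $\OC$-rank via the structure theorem over a Dedekind domain, then finish by integrality---is the standard one and is essentially what the cited reference does.

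One small point of justification is worth making explicit. The embedding \eqref{n3} in the paper is stated for $\End(X,i)$, not for $\End_K(X,i)$; when you invoke it to get an injection $\End_K(X,i)\otimes_{\OC}k(\lambda)\hookrightarrow\End_{k(\lambda)}(X[\lambda])$, you are implicitly using that the natural map $\End_K(X,i)\otimes_{\OC}k(\lambda)\to\End(X,i)\otimes_{\OC}k(\lambda)$ is injective. This follows immediately from the fact, established in the discussion around \eqref{EndKi}, that $\End_K(X,i)$ is a pure subgroup of $\End(X,i)$: the quotient is torsion-free over $\Z$, hence over $\OC$, hence projective and flat, so tensoring with $k(\lambda)$ preserves the inclusion. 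With that one line inserted, your proof is complete.
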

\end{sect}

\section{Abelian schemes}
\label{Aschemes}

Let $\KK$ be a complete discrete valuation field with discrete valuation ring $O_{\KK}$
with maximal ideal $\mm$
and perfect residue field $\kappa:=O_{\KK}/\mm$.  We write 
$$O_{\KK} \to O_{\KK}/\mm=\kappa, \ a \mapsto \bar{a}=a+\mm$$
for the residue map.  Suppose that $q$ is a positive integer that is {\sl not} divisible by $\fchar(\kappa)$ and such that $\kappa$ contains a primitive $q$th root of unity.
 Then it follows from Hensel's Lemma that $O_{\KK}$ contains a primitive $q$th root of unity, and
the reduction map defines a canonical isomorphim
$$\mu_{q,\KK} \cong \mu_{q,\kappa}, \ \gamma \mapsto \bar{\gamma}$$
between the order $q$ cyclic groups of $q$th roots of unity 
$$\mu_{q,\KK}\subset O_{\KK}^{*}\subset \KK^{*}$$ and
$ \mu_{q,\kappa}\subset \kappa^{*}$.

Since $\KK$ is complete, the absolute Galois group $\Gal(\KK)=\Aut(\bar{\KK}/\KK)$ coincides with the {\sl decomposition group}, and the inertia subgroup $\I$ of $\Gal(\KK)$ is a closed normal subgroup of $\Gal(\KK)$.  The natural extension of the reduction map  to $\bar{\KK}$ induces the surjective continuous group homomorphism \cite[Sect. 1]{ST}
\begin{equation}
\label{redG}
\mathrm{red}_{\Gal}: \Gal(\KK)/\I\to \Gal(\kappa),
\end{equation}
which is actually an {\sl isomorphism} of compact (profinite) groups.

We write $S$ for $\Spec(O_{\KK})$.
It is well known that $S$ consists of the {\sl generic point} $\eta$ that corresponds to $\{0\}$
and the closed point $s$ that corresponds to $\mm$.  

Let $f:\XX \to S$ be an abelian scheme over $S$
of positive relative dimension $g$. We write $X_{\KK}$ for its generic fiber, which is a $g$-dimensional abelian variety over $\KK$,
and $X_s$ for its closed fiber, which is a $g$-dimensional abelian variety over $\kappa$.
By definition, $\XX$ is a separated scheme; it is known \cite[Remark 1.2 on p. 1]{FC} that $\XX$ is a scheme of finite type over $S$ and, therefore, is {\sl noetherian}.
In addition, $\XX$ is a {\sl N\'eron model} of $X_{\KK}$ \cite[Prop. 8 on p. 15]{Neron}. In particular, the natural ring homomorphism
\begin{equation}
\label{ringK}
\End_S(\XX) \to \End_{\KK}(X_{\KK}),  \ \mathfrak{u} \mapsto \mathfrak{u}_{\KK}
\end{equation}
is a {\sl  isomorphism}. Notice that the natural ring homomorphism
\begin{equation}
\label{rings}
\End_S(\XX) \to \End_{\kappa}(X_{s}),  \ \mathfrak{u} \mapsto \mathfrak{u}_{s}
\end{equation}
is an {\sl  embedding}, thanks to Rigidity Lemma \cite[Ch. 6, Sect. 1, Prop. 6.1 and Cor. 6.2]{MumfordGIT}.

As usual, we write $\OC_{\XX}$ for the structure sheaf of $\XX$. Clearly, the ring $\Gamma(\XX,\OC_{\XX})$ of global sections of $\XX$ is
a $O_{\KK}$-algebra.  Since $f$ is proper, the  $O_{\KK}$-module $\Gamma(\XX,\OC_{\XX})$ is finitely generated.

The following assertion is, actually,  contained in \cite[Theorem 1 on p. p. 493 and Lemma 2 on p. 495]{ST}.

\begin{lem}
\label{torsionGalois}
Let $n$ be a positive integer that is {\sl not} divisible by $\fchar(\kappa)$. 
\begin{itemize}
\item[(i)]
The action of the inertia subgroup $\I$ on the $\Gal(K)$-module ${X_{\KK}}[n]$ is trivial and, therefore, one may view ${X_{\KK}}[n]$  as the $\Gal(\kappa)$-module via \eqref{redG}. 
\item[(ii)]
There is a canonical isomorphism of  $\Gal(\kappa)$-modules 
\begin{equation}
\mathrm{red}_n: X_{\KK}[n] \cong X_{s}[n]
\end{equation}
such that the diagram
\begin{equation}\label{diagram1}
\begin{CD}
  X_{\KK}[n]  @>{\mathfrak{u}_{\KK}}>> X_{\KK}[n] \\
@V{\mathrm{red}_n}VV@V{\mathrm{red}_n}VV\\
X_{s}[n]@>{\mathfrak{u}_s}>>X_{s}[n]
\end{CD}\end{equation}

is commutative for all $\mathfrak{u}\in \End_S(\XX)$.
\item[(iii)]
The $\Gal(\kappa)$-modules of fixed points 
$$X_{\KK}[n]^{\mathfrak{u}_{\KK}}=\{x \in X_{\KK}[n]\mid \mathfrak{u}_{\KK}(x)=x\}$$
and
$$X_{s}[n]^{\mathfrak{u}_s}=\{x \in X_{s}[n]\mid \mathfrak{u}_s(x)=x\}$$
are isomorphic for all $\mathfrak{u}\in \End_S(\XX)$.
\end{itemize}
\end{lem}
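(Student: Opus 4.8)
The plan is to prove Lemma \ref{torsionGalois} by reducing everything to the two results of Serre--Tate quoted as \cite[Theorem 1 on p.~493 and Lemma 2 on p.~495]{ST}, which handle exactly this situation (an abelian scheme over a complete discrete valuation ring with perfect residue field, and prime-to-residue-characteristic torsion).

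\medskip

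\noindent\textbf{Step 1: Good reduction and the inertia action.} First I would recall that $\XX/S$ being an abelian scheme means $X_{\KK}$ has good reduction, with $X_s$ as the reduction. The key input from \cite[Theorem 1]{ST} is the N\'eron--Ogg--Shafarevich criterion: since $X_{\KK}$ has good reduction and $n$ is invertible in $\kappa$, the inertia group $\I$ acts trivially on $X_{\KK}[n]$. Because $\I$ is a closed normal subgroup of $\Gal(\KK)$ and $X_{\KK}[n]$ is finite, the $\Gal(\KK)$-action factors through the finite quotient $\Gal(\KK)/\I$, which via the isomorphism \eqref{redG} is identified with $\Gal(\kappa)$. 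This proves (i).

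\medskip

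\noindent\textbf{Step 2: The reduction isomorphism on torsion.} For (ii), the reduction map $\XX(\bar O_{\KK}) \to X_s(\bar\kappa)$ restricts to the $n$-torsion subgroups. Since $n$ is prime to $\fchar(\kappa)$, both $X_{\KK}[n]$ and $X_s[n]$ are free $\Z/n\Z$-modules of rank $2g$, and the smoothness of $\XX/S$ together with Hensel's lemma shows that every $n$-torsion point of $X_s$ lifts uniquely to an $n$-torsion point of the generic fibre: concretely, the $n$-torsion subscheme $\XX[n]\to S$ is finite \'etale of degree $n^{2g}$ (as $n$ is invertible on $S$), hence its $\bar O_{\KK}$-points reduce bijectively onto the $\bar\kappa$-points of $X_s[n]$. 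This gives the canonical isomorphism $\mathrm{red}_n$, and its Galois-equivariance is immediate from functoriality of reduction. For commutativity of \eqref{diagram1}, note that any $\mathfrak u\in\End_S(\XX)$ is an $S$-morphism, so it commutes with reduction; restricting to $n$-torsion gives $\mathrm{red}_n\circ\mathfrak u_{\KK}=\mathfrak u_s\circ\mathrm{red}_n$. This is precisely \cite[Lemma 2]{ST}.

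\medskip

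\noindent\textbf{Step 3: Fixed points.} Part (iii) is now a formal consequence of (ii): the isomorphism $\mathrm{red}_n$ identifies $\mathfrak u_{\KK}$ with $\mathfrak u_s$ as endomorphisms of isomorphic $\Gal(\kappa)$-modules, so it carries $\ker(\mathfrak u_{\KK}-1)=X_{\KK}[n]^{\mathfrak u_{\KK}}$ onto $\ker(\mathfrak u_s-1)=X_s[n]^{\mathfrak u_s}$, and this identification is $\Gal(\kappa)$-equivariant since $\mathrm{red}_n$ is.

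\medskip

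\noindent\emph{Main obstacle.} There is no serious obstacle: the entire lemma is essentially a repackaging of Serre--Tate's results for the specific purpose of later comparing fixed points of $\delta$ on the canonical lift with those on the special fibre. The only point requiring mild care is checking that $\XX[n]\to S$ is genuinely finite \'etale (rather than just quasi-finite), which uses that $\XX\to S$ is proper and smooth and that multiplication by $n$ is \'etale on an abelian scheme when $n$ is invertible on the base; and that the identifications in (i)--(iii) are compatible with the canonical root-of-unity isomorphism $\mu_{q,\KK}\cong\mu_{q,\kappa}$ mentioned before the lemma, which matters when $\delta$ acts through roots of unity. Both are standard and can be cited from \cite{ST} and \cite{FC}.
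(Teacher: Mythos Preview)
Your proposal is correct and follows essentially the same route as the paper: both parts (i) and (ii) are reduced to Serre--Tate \cite[Theorem~1 and Lemma~2]{ST}, and (iii) is noted to be an immediate formal consequence of (ii). Your Step~2 unpacks the finite-\'etale reason behind the reduction isomorphism slightly more explicitly than the paper (which simply points to the construction in the proof of \cite[Lemma~2]{ST}), but the argument is the same.
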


\begin{proof}
(i) follows from \cite[Theorem 1 on p. p. 4 and Lemma 2 on p. 495]{ST}.
This implies that $X_{\KK}[n]^{\I}=X_{\KK}[n]$.

The desired isomorphism of Galois modules 
$$\mathrm{red}_n: X_{\KK}[n]^{\I}=X_{\KK}[n]\cong X_{s}[n]$$
 is constructed
in  the proof of Lemma 2 on p. 495 of \cite{ST}. It follows readily
from the construction that all the  diagrams \eqref{diagram1} are commutative.
This proves (ii), which immediately implies (iii).
\end{proof}

The following assertion is probably well known but I failed to find a suitable reference.

\begin{lem}
\label{basic}
The  scheme $\XX$ is  normal reduced irreducible and $\Gamma(\XX,\OC_{\XX})=O_{\KK}$.
\end{lem}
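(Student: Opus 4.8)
The plan is to prove each of the three assertions of Lemma~\ref{basic} in turn, using the fact that $\XX$ is an abelian scheme over $S=\Spec(O_{\KK})$ together with standard properties of smooth morphisms.

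First I would establish \emph{normality}. Since $f:\XX\to S$ is an abelian scheme, it is by definition smooth (and proper) over $S$; in particular $f$ is flat with smooth (hence regular, since $\kappa$ is perfect) fibers of dimension $g$. The base $S$ is the spectrum of a discrete valuation ring, so $S$ is regular (in fact a regular local scheme of dimension $1$), hence normal. The standard result that a scheme smooth over a normal (resp. regular) base is itself normal (resp. regular) — this is, e.g., \cite[\href{}{EGA IV}, 6.5.4] or an easy consequence of ``smooth + regular base $\Rightarrow$ regular'' — then gives that $\XX$ is regular, and in particular normal and reduced. So normality and reducedness both come for free from smoothness over the regular base $S$.

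Next I would prove \emph{irreducibility}. Here the point is that $\XX$, being regular (or even just normal), is a disjoint union of its irreducible components, each of which is an integral normal scheme; so it suffices to show $\XX$ is connected. Now $f:\XX\to S$ is proper and flat with geometrically connected fibers — the fibers $X_{\KK}$ and $X_s$ are abelian varieties, hence geometrically integral, in particular geometrically connected. Since $S$ is connected and the fibers of the proper flat map $f$ are connected, $\XX$ is connected (for instance: the Stein factorization of $f$ must be $f$ itself, because $f_*\OC_{\XX}$ is a finite $\OC_S$-algebra that is locally free — flatness plus properness plus reduced connected fibers — of rank $1$ on both fibers, hence equal to $\OC_S$; so $\XX\to S$ has connected fibers and $S$ connected forces $\XX$ connected). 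Combining connectedness with normality yields that $\XX$ is irreducible (and integral).

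Finally, for the global sections, the Stein factorization argument already sketched does the work: $f$ is proper, and $f_*\OC_{\XX}$ is a coherent $\OC_S$-module; flatness of $f$ together with the fact that $H^0$ of the structure sheaf of each (geometrically integral, proper) fiber is just the base field — $H^0(X_{\KK},\OC_{X_{\KK}})=\KK$ and $H^0(X_s,\OC_{X_s})=\kappa$ — shows by cohomology and base change that $f_*\OC_{\XX}$ is locally free of rank $1$, and the natural map $\OC_S\to f_*\OC_{\XX}$ is an isomorphism because it is a map of line bundles on $S$ that is an isomorphism on each fiber. Taking global sections over $S=\Spec(O_{\KK})$ gives $\Gamma(\XX,\OC_{\XX})=\Gamma(S,\OC_S)=O_{\KK}$. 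I expect the main (though still routine) obstacle to be phrasing the cohomology-and-base-change / Stein-factorization step cleanly enough to simultaneously conclude connectedness and the computation of global sections; everything else is a direct invocation of ``smooth over a regular base is regular.''
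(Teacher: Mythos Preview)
Your proof is correct, but it takes a somewhat different route from the paper's.

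For normality and reducedness, both you and the paper argue the same way: smooth over a normal base is normal. The paper cites \cite[Prop.~5.5]{MO}; you cite EGA and even note that $\XX$ is in fact regular.

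For irreducibility, the paper gives a bare-hands argument exploiting that $S$ has only two points: since $\XX$ is normal its irreducible components are disjoint, the open irreducible set $X_{\KK}$ lies in some component $Y_1$ and the closed irreducible set $X_s$ in some $Y_2$, so $\XX=Y_1\cup Y_2$; properness forces $f(Y_1)=S$, hence $Y_1$ meets $X_s\subset Y_2$, whence $Y_1=Y_2$. You instead prove connectedness of $\XX$ via Stein factorization (proper, flat, geometrically connected reduced fibers $\Rightarrow f_*\OC_{\XX}=\OC_S$). Your argument is the standard general one and works over any connected base; the paper's is more elementary but tailored to a DVR.

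For $\Gamma(\XX,\OC_{\XX})=O_{\KK}$, the paper again stays elementary: having shown $\XX$ is integral, the restriction map $\Gamma(\XX,\OC_{\XX})\hookrightarrow\Gamma(X_{\KK},\OC_{X_{\KK}})=\KK$ is injective, and its image is a finite $O_{\KK}$-submodule of $\KK$ containing $O_{\KK}$, hence equals $O_{\KK}$. You get the same conclusion from the cohomology-and-base-change computation $f_*\OC_{\XX}\cong\OC_S$ already used for connectedness. Your approach is slicker and yields connectedness and the global-sections statement in one stroke; the paper's avoids invoking base-change theorems and instead leans on the concrete fact that $O_{\KK}$ is a DVR (so has no strictly larger orders in $\KK$).
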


\begin{proof}
The abelian scheme $\XX$ is automatically of finite presentation over $S$ \cite[Remark 1.2(a) on p. 3]{FC}.  
Since $S$ is normal and $f: \XX \to S$ is smooth, it follows from  \cite[Prop. 5.5 on pp. 235--236]{MO} that $\XX$ is {\sl normal} and therefore {\sl reduced}.  It follows from \cite[Criterion 5.5.2 on pp. 234--235]{MO} that  the {\sl irreducible components} of the topological space $\XX$ are {\sl disjoint}.  Since $X_{\KK}$ is a nonempty open irreducible subset of $\XX$, it lies in an irreducible component of $\XX$, say  $Y_1$.   Since $X_{s}$ is a nonempty closed irreducible subset of $\XX$, it lies in an irreducible component of $\XX$, say  $Y_2$. This implies that the topological 
space $\XX$ is a union of $Y_1$ and $Y_2$.  Since irreducible components are closed subsets, both $Y_1$ and $Y_2$ are closed subsets in $\XX$. Since $f$ is proper, $f(Y_1)$ is a closed subset of $S$. By definition, $f(Y_1)$ contains the generic point $\eta$ of $S$ and therefore contains its closure $S$.  This implies that $f(Y_1)=S$.  Hence
 $Y_1$ meets $X_s$ and therefore meets $Y_2$ as well. It follows that $Y_1=Y_2$ and therefore $\XX=Y_1$ is {\sl irreducible}.  This implies that $\Gamma(\XX,\OC_{\XX})$ is an $O_{\KK}$-algebra without zero divisors. On the other hand, the properness of $f$ implies that the $O_{\KK}$-algebra $\Gamma(\XX,\OC_{\XX})$ is a finitely generated $\Gamma(S,\OC_S)=O_{\KK}$-module.

Recall that $X_{\KK}$ is an open nonempty subset of $\XX$. The irreducibility of $\XX$ implies that
$X_{\KK}$ is dense in $\XX$. It follows that the restriction map
$$\Gamma(\XX,\OC_{\XX}) \to \Gamma(X_{\KK},\OC_{X_{\KK}})=\KK$$
is {\sl injective}.  This implies that the (isomorphic) image of $\Gamma(\XX,\OC_{\XX})$ in $\KK$
is an {\sl order} containing $O_{\KK}$ and therefore coincides with  $O_{\KK}$. It follows that
$\Gamma(\XX,\OC_{\XX})=O_{\KK}$.
\end{proof}

We write $\Omega^1(X_{\KK})$ and $\Omega^1(X_{s})$ for the  $\KK$-vector space $\Omega^1(X_{\KK})$ of differentials of the first kind on $X_{\KK}$ and the $\kappa$-vector space $\Omega^1(X_{s})$ of differentials of the first kind on $X_{s}$ respectively (both of dimension $g$).

\begin{thm}
\label{specOmega}
Let $q$ be a positive integer  that is not divisible by $\fchar(\kappa)$ such that $\kappa$ contains a primitive $q$th root of unity.

 Suppose that $\delta_S$ is an automorphism of the abelian scheme $\XX/S$ such that $\delta_S^N$ is the identity map.
Let $\delta_{\KK}: X_{\KK} \to X_{\KK}$ and $\delta_s:X_s \to X_s$ be the automorphisms of abelian varieties $X_{\KK}$ and $X_s$ induced by $\delta_S$. Let us consider the linear automorphisms
$$\delta_{\KK}^{*}: \Omega^1(X_{\KK}) \to \Omega^1(X_{\KK}), \ 
\delta_{\kappa}^{*}: \Omega^1(X_{s}) \to \Omega^1(X_{s})$$
 induced by $\delta_{\KK}$ and $\delta_s$ respectively. 

Let $\spec(\delta_{\KK}^{*})\subset \bar{\KK}$ and  $\spec(\delta_{\kappa}^{*})\subset \bar{\kappa}$
be the sets of eigenvalues of $\delta_{\KK}^{*}$ and $\delta_{\kappa}^{*}$ respectively.

Then:

\begin{itemize}
\item[(i)]
Both $\delta_{\KK}^{*}$ and $\delta_{\kappa}^{*}$ are diagonalizable linear operators and
$$\spec(\delta_{\KK}^{*})\subset \mu_{q,\KK}\subset \KK, \ \spec(\delta_{\kappa}^{*})\subset \mu_{q,\kappa}\subset \kappa.$$
\item[(ii)]
A root of unity  $\gamma\in \mu_{q,\KK}$ lies in $\spec(\delta_{\KK}^{*})$ if and only if $\bar{\gamma}$ lies in $\spec(\delta_{\kappa}^{*})$.
\item[(iii)]
The multiplicity of each $\gamma \in \spec(\delta_{\KK}^{*})$ coincides with the multiplicity of
$\bar{\gamma} \in \spec(\delta_{\kappa}^{*})$.
\end{itemize}
\end{thm}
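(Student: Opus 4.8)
The plan is to reduce the statement to a comparison of the action of $\delta_S$ on a single locally free $O_{\KK}$-module that interpolates between $\Omega^1(X_{\KK})$ and $\Omega^1(X_s)$, namely the module of relative invariant differentials of the abelian scheme. First I would recall that for the abelian scheme $f:\XX\to S$ the sheaf $\omega_{\XX/S}:=f_*\Omega^1_{\XX/S}$ (equivalently, $e^*\Omega^1_{\XX/S}$, where $e:S\to\XX$ is the zero section) is a locally free $\OC_S$-module of rank $g$; since $S=\Spec(O_{\KK})$ is local, $M:=\Gamma(S,\omega_{\XX/S})$ is a free $O_{\KK}$-module of rank $g$. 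Base change along $\eta\to S$ and $s\to S$ (using that $f$ is smooth and proper, so formation of $f_*\Omega^1_{\XX/S}$ commutes with arbitrary base change on the base) gives canonical identifications $M\otimes_{O_{\KK}}\KK\cong\Omega^1(X_{\KK})$ and $M\otimes_{O_{\KK}}\kappa\cong\Omega^1(X_s)$ that are compatible with the functorially induced action of $\delta_S$. Write $u:M\to M$ for the $O_{\KK}$-linear automorphism induced by $\delta_S$; then $\delta_{\KK}^{*}=u\otimes\KK$ and $\delta_{\kappa}^{*}=u\otimes\kappa$.

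Next I would record that $u^N=\mathrm{id}_M$, since $\delta_S^N$ is the identity of $\XX$ and the construction $\delta_S\mapsto u$ is a ring anti-homomorphism (or homomorphism, depending on normalization) $\End_S(\XX)\to\End_{O_{\KK}}(M)$ sending $1$ to $1$. Thus $u$ satisfies the separable polynomial $T^N-1$, which splits into distinct linear factors over $O_{\KK}$ because $\fchar(\kappa)\nmid N$ and, by Hensel's Lemma, $O_{\KK}$ contains a full set of $N$th roots of unity (here one uses that $q\mid N$ need not hold in general, so I should phrase the hypothesis as: $\kappa$, and hence $O_{\KK}$, contains the $N$th roots of unity — which follows once I know, from the earlier part of the argument about $\delta^{*}$ on an abelian variety recalled in Subsection \ref{multOmega}, that actually all eigenvalues are $q$th roots of unity with $q\mid N$). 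Consequently $M$ decomposes as an internal direct sum $M=\bigoplus_{\gamma}M_{\gamma}$ of $u$-eigenspaces, each $M_{\gamma}=\ker(u-\gamma\cdot\mathrm{id}_M)$ a free $O_{\KK}$-module; freeness of each $M_\gamma$ (hence that the decomposition survives both base changes without collapsing) follows because $M_\gamma$ is a direct summand of the free module $M$ over the principal ideal domain $O_{\KK}$, using the idempotents $\tfrac{1}{N}\sum_{j}\gamma^{-j}u^{j}\in O_{\KK}[u]$, which lie in $\End_{O_{\KK}}(M)$ since $N\in O_{\KK}^{*}$.

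From the eigenspace decomposition $M=\bigoplus_{\gamma\in\mu_{q,\KK}}M_{\gamma}$ (the index set reduces to $\mu_{q,\KK}$ by the $q$th-root-of-unity constraint on the eigenvalues, which I import from Remark/Subsection \ref{multOmega} applied to $X_{\KK}$), parts (i)–(iii) follow by applying $-\otimes_{O_{\KK}}\KK$ and $-\otimes_{O_{\KK}}\kappa$: tensoring gives $\Omega^1(X_{\KK})=\bigoplus_{\gamma}M_{\gamma}\otimes\KK$ with $\delta_{\KK}^{*}$ acting on the $\gamma$-summand by $\gamma$, and $\Omega^1(X_s)=\bigoplus_{\gamma}M_{\gamma}\otimes\kappa$ with $\delta_{\kappa}^{*}$ acting on the $\gamma$-summand by $\bar{\gamma}$ (using the canonical isomorphism $\mu_{q,\KK}\cong\mu_{q,\kappa}$ of the preamble to this section). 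Diagonalizability is then immediate, the eigenvalue $\gamma$ (resp.\ $\bar\gamma$) occurs in $\spec(\delta_{\KK}^{*})$ (resp.\ $\spec(\delta_{\kappa}^{*})$) exactly when $M_{\gamma}\ne 0$, and its multiplicity equals $\mathrm{rank}_{O_{\KK}}M_{\gamma}=\dim_{\KK}(M_\gamma\otimes\KK)=\dim_{\kappa}(M_\gamma\otimes\kappa)$ in both cases; this proves (i), (ii), (iii) simultaneously. The main obstacle, and the point I would take care to justify carefully, is the base-change compatibility in the first paragraph: that $\omega_{\XX/S}=f_*\Omega^1_{\XX/S}$ is locally free of rank $g$ and that its formation commutes with the base changes to the generic and special points, identifying $\delta_{\KK}^{*}$ and $\delta_{\kappa}^{*}$ as the two specializations of one operator $u$ on $M$ — everything after that is linear algebra over the discrete valuation ring $O_{\KK}$.
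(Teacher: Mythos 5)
Your proposal is correct and follows essentially the same route as the paper: both arguments work with the single free rank-$g$ $O_{\KK}$-module of global relative invariant differentials $\Gamma(\XX,\Omega^{1}_{\XX/S})=\Gamma(S,f_{*}\Omega^{1}_{\XX/S})$, diagonalize the lifted operator over $O_{\KK}$ using that $T^{N}-1$ splits with distinct roots there, and read off (i)--(iii) by restricting an eigenbasis to the generic and closed fibers. The only cosmetic difference is that the paper justifies freeness and fiberwise compatibility via the triviality of $\Omega^{1}_{\XX/S}$ as an $\OC_{\XX}$-module together with $\Gamma(\XX,\OC_{\XX})=O_{\KK}$ (Lemma \ref{basic}), whereas you invoke cohomology and base change for $f_{*}\Omega^{1}_{\XX/S}$; and you rightly flag the paper's conflation of $N$ and $q$, which should be read as $N=q$.
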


\begin{proof}
Clearly, 
$$(\delta_{\KK}^{*})^q=1_{X_{\KK}},  (\delta_{\kappa})^N=1_{X_s}$$
and therefore both $(\delta_{\KK}^{*})^N$ and $(\delta_{\kappa}^{*})^N$ are the identity automomorphisms of $\Omega^1(X_{\KK}) $ and $\Omega^1(X_{s})$ respectively.
The conditions on $q$ imply that both $\delta_{\KK}^{*}$ and $\delta_{\kappa}^{*}$ are diagonalizable and their eigenvalues lie in $\mu_{q,\KK}$ and $\mu_{q,\kappa}$ respectively. This proves (i).

In order to prove (ii) and (iii), recall \cite[Cor. 3 on p. 102]{Neron} that the sheaf $\Omega^{1}_{\XX/S}$ of relative differentials is a free $\OC_{\XX}$-module of rank $g$ and therefore is generated by certain $g$ sections that remain linearly independent at every point of $\XX$. It follows from Lemma \ref{basic} that the group $\Gamma(\XX, \Omega^{1}_{\XX/S})$ of its global sections is a free $O_{\KK}$-module of rank $g$.  Let 
$$\delta_S^{*}: \Gamma(\XX, \Omega^{1}_{\XX/S}) \to \Gamma(\XX, \Omega^{1}_{\XX/S})$$
be the automorphism of $\Gamma(S,\OC_S)=\OC_K$-module $\Gamma(\XX, \Omega^{1}_{\XX/S})$ induced by $\delta_S$.  Clearly, $(\delta_S^{*})^N$ is the identity map. Our conditions on $N$ imply the existence of a basis $\{\omega_1, \dots, \omega_g\}$ of  $\Gamma(\XX, \Omega^{1}_{\XX/S})$ such that
for each $i=1, \dots g$ there exists $\gamma_i\in \mu_{N,\KK}$ such that
$$\delta_S^{*} \omega_i=\gamma_i \omega_i.$$
Clearly, $\{\omega_1, \dots, \omega_g\}$ remain linearly independent at every point of $\XX$. This implies that
their {\sl restrictions} $\{\omega_{1,s}, \dots, \omega_{g,s}\}$ to $X_s$ constitute a basis of the $\kappa$-vector space
$$\Gamma(X_s,  \Omega^{1}_{X/\kappa})=\Omega^1(X_{s}),$$ their {\sl restrictions}
$\{\omega_{1,\KK}, \dots, \omega_{g,\KK}\}$ to $X_{\KK}$ constitute a basis of the $\KK$-vector space
$$\Gamma(X_{\KK},  \Omega^{1}_{X_{\KK}/\KK})=\Omega^1(X_{\KK}).$$
 The functoriality of the formation of relative differentials and its compatibility with base change   \cite[Prop. 3 on p. 34 and p. 35]{Neron} imply that
$$\delta_{\KK}^{*} \omega_{i,\KK}=\gamma_i \cdot \omega_{i,\KK}, \ \delta_s^{*} \omega_{i,s}=\bar{\gamma_i}\cdot \omega_{i,s}  .$$
for all $i=1, \dots g$. This implies readily (ii) and (iii).
\end{proof}

\begin{thm}
\label{GaloisDelta}
We keep the notation and assumptions of Theorem \ref{specOmega}.
Suppose that there exist a prime $\ell$ and a positive integer $r$ such that
$q=\ell^r$ (in particular, $\ell \ne \fchar(\kappa)$, the field $\kappa$ contains a primitive $q$th root of unity) and $\delta_S \in \End_S(\XX)$ satisfies the $q$th cyclotomic equation
$$\Phi_q(\delta)=0,$$
where the $q$th cyclotomic polynomial
$$\Phi_q(t)=\frac{t^{\ell^r}-1}{t^{\ell^{r-1}}-1}=\sum_{j=0}^{l-1}t^{j \ell^{r-1}}\in \Z[t].$$
Let us consider the subgroups of fixed points
$$X_{\KK}^{\delta_{\KK}}=\{x \in X_{\KK}(\bar{\KK})\mid \delta_{\KK}(x)=x\}\subset X_{\KK}(\bar{\KK})$$
 and
$$X_{s}^{\delta_{s}}=\{y \in X_{s}(\kappa)\mid \delta_{s}(y)=y\}\subset X_{s}(\bar{\kappa}).$$

Then:
\begin{itemize}
\item[(i)]
$X_{\KK}^{\delta_{\KK}}$ is a $\Gal(K)$-submodule of $X_{\KK}[\ell]$  and
$X_{s}^{\delta_{s}}$ is a $\Gal(\kappa)$-submodule of $X_s[\ell]$.
\item[(ii)]
$X_{s}^{\delta_{s}}$ viewed as the  $\Gal(K)$-module is isomorphic to 
$X_{\KK}^{\delta_{\KK}}$.
\end{itemize}
\end{thm}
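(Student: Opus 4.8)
The plan is to combine one elementary observation --- that a fixed point of $\delta$ is automatically killed by $\ell$ --- with the comparison isomorphism between the torsion Galois modules of the generic and closed fibres furnished by Lemma \ref{torsionGalois}. Once (i) identifies $X_{\KK}^{\delta_{\KK}}$ and $X_s^{\delta_s}$ with the $\delta$-fixed subspaces of $X_{\KK}[\ell]$ and $X_s[\ell]$, part (ii) will be immediate from Lemma \ref{torsionGalois}, applied with $n=\ell$ and $\mathfrak u=\delta_S$.

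First I would prove (i). Since $\delta_S$ is an automorphism of the abelian scheme $\XX/S$, its restriction $\delta_{\KK}$ is defined over $\KK$ and $\delta_s$ over $\kappa$; hence the fixed-point subgroups $X_{\KK}^{\delta_{\KK}}\subset X_{\KK}(\bar{\KK})$ and $X_s^{\delta_s}\subset X_s(\bar\kappa)$ are stable under $\Gal(\KK)$ and $\Gal(\kappa)$, respectively. To see that they are $\ell$-torsion, take $x\in X_{\KK}^{\delta_{\KK}}$: then $\delta_{\KK}^{j\ell^{r-1}}(x)=x$ for every $j$, so evaluating the relation $\Phi_q(\delta_{\KK})=\sum_{j=0}^{\ell-1}\delta_{\KK}^{j\ell^{r-1}}=0$ at $x$ gives $\ell x=0$, i.e. $x\in X_{\KK}[\ell]$; the identical computation inside $X_s(\bar\kappa)$ shows $X_s^{\delta_s}\subset X_s[\ell]$. (Incidentally, since $t^{\ell^r}-1=\Phi_q(t)\,(t^{\ell^{r-1}}-1)$, the hypothesis $\Phi_q(\delta_S)=0$ already forces $\delta_S^q=1_{\XX}$, consistent with the finite-order assumption inherited from Theorem \ref{specOmega}.) Consequently $X_{\KK}^{\delta_{\KK}}$ is precisely the subspace $X_{\KK}[\ell]^{\delta_{\KK}}$ of $\delta_{\KK}$-invariants in $X_{\KK}[\ell]$, and likewise $X_s^{\delta_s}=X_s[\ell]^{\delta_s}$.

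For (ii) I would invoke Lemma \ref{torsionGalois} with $n=\ell$ (legitimate because $\ell=\fchar(\KK$-$\text{residue})$ is excluded, as $\ell\ne\fchar(\kappa)$) and $\mathfrak u=\delta_S\in\End_S(\XX)$. By parts (i) and (ii) of that lemma the inertia group $\I$ acts trivially on $X_{\KK}[\ell]$, and the canonical reduction map $\mathrm{red}_\ell\colon X_{\KK}[\ell]\to X_s[\ell]$ is an isomorphism of $\Gal(\kappa)$-modules for which the square \eqref{diagram1} commutes with $\mathfrak u=\delta_S$. Commutativity of that square shows that $\mathrm{red}_\ell$ carries $X_{\KK}[\ell]^{\delta_{\KK}}$ isomorphically onto $X_s[\ell]^{\delta_s}$; together with the identifications established in (i), this is an isomorphism of $\Gal(\kappa)$-modules $X_{\KK}^{\delta_{\KK}}\cong X_s^{\delta_s}$. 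Finally, since $\I$ acts trivially on the submodule $X_{\KK}^{\delta_{\KK}}$ of $X_{\KK}[\ell]$, this $\Gal(\KK)$-module is the inflation of a $\Gal(\kappa)$-module along the isomorphism $\mathrm{red}_{\Gal}\colon\Gal(\KK)/\I\xrightarrow{\ \sim\ }\Gal(\kappa)$ of \eqref{redG}; inflating the displayed isomorphism then yields the asserted isomorphism of $\Gal(\KK)$-modules, which is (ii).

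I do not expect a real obstacle: the statement is essentially a repackaging of Lemma \ref{torsionGalois} together with the one-line fact that $\delta$-fixed points are $\ell$-torsion. The only point requiring a little care is the bookkeeping between $\Gal(\KK)$-modules and $\Gal(\kappa)$-modules, which is handled by the triviality of the $\I$-action in Lemma \ref{torsionGalois}(i) and the identification \eqref{redG}.
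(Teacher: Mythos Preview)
Your argument is correct and follows essentially the same route as the paper: for (i) you evaluate $\Phi_q(\delta)=0$ at a fixed point to get $\ell x=0$, and for (ii) you apply Lemma~\ref{torsionGalois} with $n=\ell$ and $\mathfrak{u}=\delta_S$, exactly as the paper does. Your write-up is in fact more careful than the paper's about the passage between $\Gal(\KK)$-modules and $\Gal(\kappa)$-modules via the inertia-triviality and \eqref{redG}, which the paper leaves implicit.
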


\begin{proof}
(i) We have 
$$\ell x=\sum _{j=0}^{l-1}\delta_{\KK}^{j \ell^{r-1}}(x)=0, \ \ell y=\sum _{j=0}^{l-1}\delta_{s}^{j \ell^{r-1}}(y)=0.$$

(ii) It follows readily from Lemma \ref{torsionGalois} applied to $n=\ell$ and $\mathfrak{u}=\delta$.
\end{proof}

\begin{rem}
\label{ordinarylift}
Let $\kappa$ be a perfect field of prime characteristic $p$, $W(\kappa)$ its ring of Witt vectors and $\mathbf{K}$ the field of fractions of $W(\kappa)$. The field $\mathbf{K}$ is a complete discrete valuation field of characteristic zero with valuation ring  $W(\kappa)$ and residue field $\kappa$.

 Let $X_0$ be an abelian variety over $\kappa$ of positive dimension $g$. If $X_0$ is {\sl ordinary} then there exists a {\sl canonical} Serre-Tate lifting of $X_0$ - an abelian scheme $\mathcal{X}$ over $S=\Spec(W(\kappa))$, whose closed fiber coincides with $X_0$, and the natural ring homomorphism  $\End_S(\mathcal{X}) \to \End_{\kappa}(X_0)$ is an isomorphism \cite[p. 172, Th. 3.3]{Messing}. We write $X_{\mathbf{K}}$ for the generic fiber of $\mathcal{X}$; it is a $g$-dimensional abelian variety over ${\mathbf{K}}$. The abelian scheme $\mathcal{X}$ is a N\'eron model of $X_{\mathbf{K}}$ and therefore the natural ring homomorphism  $\End_S(\mathcal{X}) \to \End_{K}(X_{\mathbf{K}})$ is an isomorphism \cite{Neron}.
\end{rem}

\section{Proof of Theorem \ref{generalA}}
\label{pfGA}
Since $\Phi_q(t)$ divides $t^q-1$ in $\Z[t]$, it follows from \eqref{cyclq} that $\delta_{\KK}^q-1_{\YY}=0$, which proves (a).
The ring homomorphism $\mathrm{i}_{\YY}$ defined in 
\eqref{embY} is an embedding, because the $q$th cyclotomic polynomial is irreducible over $\Q$.  This proves 
(b), which, in turn, implies (c) in light of Remark \ref{reldeg}. 
The assertion (d) about the cardinality of the spectrum of $\delta_{\KK}^{*}$ is actually 
proven on pp. 357--358 of \cite{ZarhinM}.

{\bf Proof} of (e).
Let us put
$$E=\Q(\zeta_q), \ O=\Z[\zeta_q]\subset \Q(\zeta_q)=E$$
and write $\lambda$ for the maximal ideal $(1-\zeta_q)O$ of $O$. It is well known that 
\begin{itemize}
\item
$\lambda$ is the only maximal ideal of $O$ that lies above $\ell$;
\item
 the residue field
$k(\lambda)=O/\lambda=\F_{\ell}$.
\end{itemize}
As in \cite{ZarhinM} we define
$$\YY_{\lambda}=\{y\in \YY(\bar{\KK})\mid \mathrm{i}_{\YY}(e)y=0 \ \forall e \in \lambda\}.$$
Since $\lambda$ is generated by $(1-\zeta_q)O$ and $ \mathrm{i}_{\YY}(\zeta_q)=\delta_{\KK}$,
$$\YY_{\lambda}=\{y\in \YY(\bar{\KK})\mid (1_{\YY}-\delta_{\KK})y=0\}=\YY^{\delta}.$$
Combining Property (2)  of Theorem \ref{generalA} and \eqref{dimXlambda}, we obtain that
\begin{equation}
\label{dimYE}
n-1=\frac{2\dim(\YY)}{[E:\Q]}, \ 2 \dim(\YY)=(n-1)\varphi(q).
\end{equation}

Let $\Q[\delta_{\KK}]$ be the $\Q$-subalgebra   of $\End^0(\YY)$ generated by $\delta_{\KK}$. Actually, 
 $\Q[\delta_{\KK}]$ is a (sub)field isomorphic to $\Q(\zeta_q)$. We write $\End^0(\YY,\mathrm{i}_{\YY})$  for the {\sl centralizer}  of $\Q[\delta_{\KK}]$ in $\End^0(\YY)$. Clearly
 $$\delta_{\KK} \in \Q[\delta_{\KK}] \subset  \End^0(\YY,\mathrm{i}_{\YY})\subset \End^0(\YY).$$

It follows from Property (2) and Theorem 4.7 of \cite{ZarhinCrelle} that the $\Gal(\KK)$-module $\YY_{\lambda}$ is {\sl very simple} in a sense of \cite{ZarhinTexel}. It follows from Theorem 3.8 of \cite{ZarhinM} that one of the following conditions holds.

\begin{itemize}
\item[(A)]
The subring $\mathrm{i}_{\YY}(O)$ coincides with its own centralizer in $\End(\YY)$.
In other words, every endomorphism in $\End(\YY)$ that commutes with $\delta_{\KK}$ may be presented as a polynomial in $\delta_{\KK}$ with integer coefficients. Equivalently, 
$$\Q[\delta_{\KK}]=\End^0(\YY,\mathrm{i}_{\YY}).$$
\item[(B)]
 $\End^0(\YY,\mathrm{i}_{\YY})$   is a {\sl central simple} $\Q[\delta_{\KK}]$-algebra of dimension 
$\left(2\dim(\YY)/[E:\Q]\right)^2$. (It follows from \eqref{dimYE} that the $\Q[\delta_{\KK}]$-dimension of $\End^0(\YY,\mathrm{i}_{\YY})$ is $(n-1)^2$.) 
\end{itemize}

Let us prove that the case (B) does {\sl not} occur.
Every eigenspace of $\delta_{\KK}^{*}$ is invariant under the natural action of $\End^0(\YY,\mathrm{i}_{\YY})$.  The  properties of $\End^0(\YY,\mathrm{i}_{\YY})$ imply that the multiplicity of every eigenvalue of $\delta_{\KK}^{*}$ is divisible by $(n-1)$. (It is the only place where we use that $\fchar(\KK)=0$.) Notice that for each positive integer $i<q$ with $(i,\ell)=1$
\begin{equation}
\label{iqmi}
\left[\frac{ni}{q}\right] +\left[\frac{n(q-i)}{q}\right] =n-1.
\end{equation}
This implies that if $\zeta^{-i}$ is an eigenvalue of $\delta_{\KK}^{*}$ then its multiplicity $[ni/q]$ is a positive integer divisible by $(n-1)$ and therefore equals $(n-1)$. By \eqref{iqmi}, 
$[n(q-i)/q]=0$, i.e.,  $\zeta^{-(q-i)}=\zeta^{-i}$ is {\sl not} an eigenvalue of $\delta_{\KK}^{*}$.
This implies that the number of distinct  eigenvalues of $\delta^{*}$ does {\sl not} exceed $\varphi(q)/2$, which is not true. This proves that the case (B) does {\sl not} occur.

So, the case (A) holds, i.e., the field $ \Q[\delta_{\KK}]\cong \Q(\zeta_q)$ coincides with its own centralizer in $\End^0(\YY)$. Then $ \Q[\delta_{\KK}]$ contains the center $\mathfrak{C}_{\YY}$ of $\End^0(\YY)$.  It follows from Corollary 2.2 of \cite{ZarhinM} that $\Q[\delta_{\KK}]$ coincides with $\mathfrak{C}_{\YY}$. This implies that the centralizer of $ \Q[\delta_{\KK}]$ is the whole $\End^0(\YY)$ and, therefore, $\End^0(\YY)=\Q[\delta_{\KK}]$. Since $\Z[\delta_{\KK}]\cong \Z[\zeta_q]$ and $\Z[\zeta_q]$ is the maximal order in $\Q[\zeta_q]$,  
$$\End(\YY)=\Z[\delta_{\KK}]\cong \Z[\zeta_q].$$
This ends the proof of Theorem \ref{generalA}.

\section{Proof of Theorem \ref{ordinary}}
\label{Pordinary}
Let $W(K)$ be the ring of Witt vectors over $K$ and $\KK$ its field of fractions, which has characteristic $0$. Since $K$ contains a primitive $q$th root of unity,
there is a primitive $q$th root of unity
$$\zeta \in W(K)\subset  \KK,$$
that goes to $\zeta$ in $K$ under the reduction map.

 Let  $S:=\Spec (W(K))$ and $f:\mathcal{Y}\to S$ be the canonical Serre-Tate lifting of $Y$ \cite{Messing}, which is an abelian scheme over $S$, whose closed fiber is $Y$. We write $\YY$ for the generic fiber of $f$, which is an abelian variety over $\KK$ with the same dimension as $Y$.  We know (see Remark \ref{ordinarylift}) that the natural ring isomorphisms
\begin{equation}
\label{liftHom}
                    \End_{K}(Y)     \leftarrow           \End_S(\mathcal{Y})\to \End_{\KK}(\YY)
\end{equation}
are isomorphisms. Let $\delta_S\in \End_S(\mathcal{Y})$ be the preimage (lifting) of $\delta$ in 
$\End_S(\mathcal{Y})$ and $\delta_{\KK}\in  \End_{\KK}(\YY)$ be the image (restriction to the generic fiber) of $\delta_S$ in $\End_{\KK}(\YY)$. 
Isomorphisms \eqref{liftHom} induce ring isomorphisms
\begin{equation}
\label{liftDelta}
\Z[\delta_{\KK}]  \leftarrow \Z[\delta_S] \to \Z[\delta].
\end{equation}

In light of \eqref{ordinary},
\begin{equation}
\label{PhiD}
\Phi_q(\delta_S)=0,  \ \Phi_q(\delta_{\KK})=0; \
\delta_S^q=1_{\mathcal{Y}}, \ \delta_{\KK}^q=1_{\YY}.
\end{equation}
Then the ring homomorphism
$$i_{\YY}: \Z[\zeta_q] \to \Z[\delta_{\KK}]\subset \End_{\KK}(\YY) $$  
that sends $1$ to $1_{\YY}$ and $\zeta_q$ to $\delta_{\KK}$ is a ring embedding.
This proves  Theorem \ref{ordinary}(b), which, in turn, implies Theorem \ref{ordinary}(b). In order to prove Theorem \ref{ordinary}(a,b), we are going to apply Theorem  \ref{generalA}  to $\YY/\KK$.  Theorems \ref{GaloisDelta} and \ref{specOmega} applied to $\mathcal{Y}/S$ imply that all the conditions of  Theorem  \ref{generalA} are fulfilled. Applying Theorem  \ref{generalA}(d,e) to $\YY/\KK, \delta_{\KK}, \zeta$, we conclude that the spectrum of $\delta^{*}$ consists of more than $\varphi(q)/2$ eigenvalues and 
$$i_{\YY}(\Z[\zeta_q]) = \End_{\KK}(\YY) =\End(\YY).$$  
It remains to recall that there is a canonical ring  isomorphism between $\End(Y)$ and $\End(\YY)$
under which $\delta=i_Y(\zeta_q)$ goes to $\delta_{\KK}=i_{\YY}(\zeta_q)$.

\section{Superelliptic Jacobians}
\label{superQ}
We are heading for the proof of Theorem \ref{mainordinaryq}.
\label{proofm} We keep the notation of Section \ref{one}. 
In
particular, $\ell$ is a prime, $K$ is a field of characteristic $p \ne \ell$ that contains
$\zeta$, which is a primitive $q=\ell^r$th root of unity; $f(x)\in K[x]$ is a separable 
polynomial of degree $n\ge 4$.

 We write $C_{f,q}$
for the superelliptic $K$-curve $y^{q}=f(x)$ and $J(C_{f,q})$ for
its Jacobian. Recall that $J(C_{f,q})$ is an abelian variety that is
defined over $K$. 
In \cite{ZarhinM} I constructed an abelian positive-dimensional $K$-subvariety $J^{(f,q)}\subset J(C_{f,q})$
that enjoys the following properies.

\begin{itemize}
\item[(i)]
There exists a $K$-isogeny  of abelian varieties
$$\prod_{j=1}^r J^{(f,\ell^i)}\to J(C_{f,q}).$$
\item[(ii)]
There is a ring embedding
$$e_q: \Z[\zeta_q]\hookrightarrow \End_K(J^{(f,q)})$$
that sends $1$ to $1_{J^{(f,q)}}$ and enjoys the following properies.
\begin{enumerate}

\item[(1)]
$\delta:=e_q(\zeta_q)\in \End_K(J^{(f,q)})$ is an automorphism of $J^{(f,q)}$ that satisfies
$$\sum_{i=0}^{\ell-1}\delta^{i \ell^{r-1}}=0, \ \delta^q=1_{J^{(f,q)}}.$$
\item[(2)]
If $l \nmid n$ then the $\Gal(K)$-(sub)module $\left(J^{(f,q)}\right)^{\delta}$ of $\delta$-invariants  is isomorphic to $\left(\F_{\ell}^{\RR_f}\right)^0$.

\item[(3)]
If $l \nmid n$ then the eigenvalues of $\delta^{*}:\Omega^1(J^({f,q)})$ are primitive $q$th roots of unity.
In addition, for each positive integer $i$ with
$$1 \le i < q, \ (i,\ell)=1$$
the multiplicity of $\zeta^{-i}$ as an eigenvalue of $\delta^{*}$ is $[ni/q]$. In particular,
 $\zeta^{-i}$ is an eigenvalue of $\delta^{*}$ if and only if $[ni/q]>0$. 
 \end{enumerate} 
\end{itemize}

\begin{proof}[Proof of Theorem \ref{mainordinaryq}]
In light of Remarks \ref{simplify}, we may assume that $\ell$ does not divide $n$, the field $K$ is perfect and  contains a primitive $q$th root of unity, and $\Gal(f)=\An$. Since $J(C_{f,q})$ is ordinary, it follows from Property (i) that  $J^{(f,\ell^j)}$ is ordinary if $1 \le j\le r$.  Applying Theorem \ref{ordinary} to
$$Y=J^{(f,\ell^j)}, \delta=e_{\ell^j}(\zeta_{\ell^j})$$
for all such $j$, we obtain that
$$\End(J^{(f,\ell^j)})\cong \Z[\zeta_{\ell^j}], \ \End^0(J^{(f,\ell^j)})\cong \Q(\zeta_{\ell^j}).$$
In particular, all $J^{(f,\ell^j)}$ are absolutely simple and their endomorphism algebras are non-isomorphic for distinct $j$.
Now Property (i) implies that
$$\End^0(J(C_{f,q}) )\cong \oplus_{j=1}^r \End^0(J^{(f,\ell^j)})\cong  \oplus_{j=1}^r \Q(\zeta_{\ell^j}).$$
This implies that
$$\End^0(J(C_{f,q})) \cong \oplus_{j=1}^r \Q(\zeta_{\ell^j}).$$
On the other hand,  $\End^0(J(C_{f,q}))$ contains the $\Q$-subalgebra $\Q[\delta_q]$ and the latter is isomorphic to $\oplus_{j=1}^r \Q(\zeta_{\ell^j})$
(see  Subsect. \ref{superqdelta}). Now $\Q$-dimension arguments imply that
$\End^0(J(C_{f,q})=\Q[\delta_q]$.
\end{proof}

\section{Corrigendum to \cite{ZarhinGanita}}

\begin{itemize}
\item
Page 515,  line 6: one should read $(x,\zeta_q y)$ (not $(x,\zeta y)$ ).
\item
Page 515, line 9: one should read $\delta_q$ (not $\delta_p$ ).
\item
Page 515, line 18 (displayed formula): one should read
$(J(C_{f,q}))$ (not $((C_{f,q}))$ ).
\item
Page 515, line 6 from below (displayed formula): one should read $J^{(f,q)}$ (not $J^{f,q}$ ).
\item
Page 515, line 4 from below(displayed formula)  starts with $i$ (not $1$ ).
\item
Page 516, line 5 (displayed formula): one should read $J^{(f,q)}$ (not $J^{f,q}$ ).
\item
Page 516, line 11: one should read $\Z[\zeta_q]$ (not $Z[\zeta_q]$ ).
\item
Page 516,  line 13: one should read $J^{(f,q)}_{\lambda}$ (not $J^{f,q}_{\lambda}$ ).
\end{itemize}

\end{document}